\documentclass[10pt]{amsart}
\usepackage{latexsym,amsmath,amssymb,graphicx}
\usepackage{float}
\usepackage[bf, small]{caption}
\usepackage[all,web]{xy}
\usepackage{color}
\usepackage{hyperref}




\usepackage{xcolor}

\def\xyellowspace{%
  \sbox0{\colorbox{yellow}{\strut\ }}
  \dimen0=\wd0\relax
  \hskip0pt\cleaders\box0\hskip\dimen0\hskip0pt}

\begingroup
\catcode`\ =\active%
\gdef\makeyellowspace{\let \xyellowspace\catcode`\ =\active}%
\endgroup

\def\?#1{\colorbox{yellow}{\strut#1}}



\def\urlfont{\DeclareFontFamily{OT1}{cmtt}{\hyphenchar\font='057}
              \normalfont\ttfamily \hyphenpenalty=10000}

\DeclareFontFamily{OT1}{rsfs10}{}
\DeclareFontShape{OT1}{rsfs10}{m}{n}{ <-> rsfs10 }{}
\DeclareMathAlphabet{\mathscript}{OT1}{rsfs10}{m}{n}

\DeclareMathOperator{\im}{Im}       
\DeclareMathOperator{\Hom}{Hom}     
\DeclareMathOperator{\Tors}{Tors}    
\DeclareMathOperator{\Exc}{Exc}     
\DeclareMathOperator{\Pic}{Pic}     
\DeclareMathOperator{\Cl}{Cl}       
\DeclareMathOperator{\rk}{rk}       
\DeclareMathOperator{\Mov}{Mov}     
\DeclareMathOperator{\Nef}{Nef}     
\DeclareMathOperator{\Eff}{Eff}     
\DeclareMathOperator{\Relint}{Relint}  
\DeclareMathOperator{\REF}{REF}     

\title[Fibration of smooth projective toric varieties]{Fibration and classification of smooth projective toric varieties of low Picard number}

\author[M. Rossi and L.Terracini]{Michele Rossi and Lea Terracini}

\date{\today}

\address{Dipartimento di Matematica, Universit\`a di Torino,
via Carlo Alberto 10, 10123 Torino} \email{michele.rossi@unito.it,
lea.terracini@unito.it}

\thanks{The authors were partially supported by the MIUR-PRIN 2010-11 Research Funds ``Geometria delle Variet\`{a} Algebriche''. The first author is also supported by the I.N.D.A.M. as a member of the G.N.S.A.G.A.}

\subjclass[2010]{14M25, 06D50}
\keywords{$\Q$-factorial complete toric varieties, fibration, toric projective bundle, Gale duality, the secondary fan, primitive collection, primitive relation, fan matrix, weight matrix, nef divisors, big divisors, nef cone, moving cone, pseudo-effective cone, Picard number, poly weighted spaces}

\def \b{\beta }

\def \s{\sigma }

\def \Ga{\Gamma }
\def \Si{\Sigma }
\def \g{\gamma}
\def \vf{\varphi}

\def \q{\mathbf{q}}
\def \pp{\mathbf{p}}

\def \u{\mathbf{u}}
\def \v{\mathbf{v}}
\def \n{\mathbf{n}}

\def \w{\mathbf{w}}
\def \t{\mathbf{t}}
\def \z{\mathbf{z}}

\def \y{\mathbf{y}}
\def \1{\mathbf{1}}
\def \0{\mathbf{0}}

\def\P{{\mathbb{P}}}

\def\p2{\mathbb{P}^2}
\def\p3{\mathbb{P}^3}
\def\p4{\mathbb{P}^4}

\def\rk{\operatorname{rk}}

\def\NE{\operatorname{NE}}
\def\Mat{\operatorname{Mat}}

\def\Z{\mathbb{Z}}

\def\C{\mathbb{C}}
\def\R{\mathbb{R}}
\def\M{\mathbf{M}}
\def\Q{\mathbb{Q}}
\def\N{\mathbb{N}}

\def\B{\mathcal{B}}

\def\CQ{\mathcal{Q}}
\def\SF{\mathcal{SF}}
\def\pc{\mathcal{P}}
\def\gkz{\mathcal{Q}}

\def\G{\mathcal{G}}

\def\Ga{\Gamma}

\def\Weil{\mathcal{W}_T}

\theoremstyle{plain}
\newtheorem{theorem}{Theorem}[section]
\newtheorem{proposition}[theorem]{Proposition}

\newtheorem{thm-def}[theorem]{Theorem--Definition}
\newtheorem{corollary}[theorem]{Corollary}

\newtheorem{lemma}[theorem]{Lemma}

\newtheorem*{a-proposition}{Proposition}

\theoremstyle{remark}

\newtheorem{example}[theorem]{Example}

\theoremstyle{definition}
\newtheorem{definition}[theorem]{Definition}

\newtheorem*{step I}{Step I}
\newtheorem*{step II}{Step II}
\newtheorem*{step III}{Step III}
\newtheorem*{step IV}{Step IV}

\newtheorem*{acknowledgements}{Acknowledgements}

%

\newcommand{\halfline}{\vskip6pt}
%

\newcommand{\longmapsfrom}{\mathrel{\reflectbox{\ensuremath{\longmapsto}}}}

\begin{document}

\begin{abstract}
In this paper we show that a smooth toric variety $X$ of Picard number $r\leq 3$ always admits a nef primitive collection supported on a hyperplane admitting non-trivial intersection with the cone $\Nef(X)$ of numerically effective divisors and cutting a facet of the pseudo-effective cone $\Eff(X)$, that is $\Nef(X)\cap\partial\overline{\Eff}(X)\neq\{0\}$. In particular this means that $X$ admits non-trivial and non-big numerically effective divisors. Geometrically this guarantees the existence of a fiber type contraction morphism over a smooth toric variety of dimension and Picard number lower  than those of $X$, so giving rise to a classification of smooth and complete toric varieties with $r\leq 3$. Moreover we revise and improve results of Oda-Miyake by exhibiting an extension of the above result to projective, toric, varieties of dimension $n=3$ and Picard number $r=4$, allowing us to classifying all these threefolds. We then improve results of Fujino-Sato, by presenting sharp (counter)examples of smooth, projective, toric varieties of any dimension $n\geq4$ and Picard number $r=4$ whose non-trivial nef divisors are big, that is $\Nef(X)\cap\partial\overline{\Eff}(X)=\{0\}$. Producing those examples represents an important goal of computational techniques in definitely setting an open geometric problem. In particular, for $n=4$, the given example turns out to be a weak Fano toric fourfold of Picard number 4.

\end{abstract}

\maketitle

\tableofcontents

\section*{Introduction}
Throughout the present paper a \emph{contraction morphism}, or simply a \emph{contraction}, is a surjective morphism with connected fibers $\vf:X\to B$ between $\Q$--factorial complete toric va\-rie\-ties.
If $\dim(B)<\dim(X)$ then $\vf$ is said \emph{of fiber type}. Otherwise $\vf$ is a \emph{birational contraction}.
In particular a contraction is called
\begin{itemize}
\item a \emph{fibration} $f:X\to B$, if $f$ exhibits $X$ as a locally trivial, torus equivariant, fiber bundle over the base $B$, with fixed fiber $F$;
\item a \emph{fibrational contraction} if it is the composition $f\circ \phi$ of a birational contraction $\phi$ and a fibration $f$; its \emph{base} and \emph{fiber} are the base and the fiber of the fibration $f$, respectively, and its \emph{exceptional locus} is the exceptional locus $\Exc(\phi)$ of the contraction $\phi$;
\item a \emph{projective toric bundle} (PTB) is the projectivization of a decomposable bundle over a smooth projective toric variety \cite[\S\,7.3]{CLS}.
\end{itemize}
As a general result, this paper is partly devoted to give a proof of the following:
\begin{theorem}\label{thm:intro}
  Let $X(\Si)$ be a smooth and complete $n$--dimensional toric variety of Picard number $2\leq r\leq 3$. Then there exists a fiber type contraction $\vf:X\twoheadrightarrow B$ over a smooth complete toric variety $B$ of dimension $m\leq n-1$ and Picard number $r_B\leq r-1$ such that either $B\cong\P^m$ or $B$ is a PTB over a projective space of smaller dimension.
  In particular we get the following:\\
  \textbf{\emph{Classification:}} if $X$ is a smooth and complete toric variety of Picard number $r\leq 3$ then  one of the following cases occurs:
  \begin{enumerate}
    \item $r=1$ and $X$ is a projective space;
    \item $r=2$ and $\vf:X\to B$ is a fibration exhibiting $X$ as a PTB over a projective space $B\cong\P^m$;
    \item $r=3$ and $\vf:X\to B$ is a fibration exhibiting $X$ as a PTB over a smooth toric variety $B$ of Picard number 2; by the previous part (2), $B$ is still a PTB over a projective space, meaning that $X$ is obtained from a projective space by a sequence of two projectivizations of decomposable toric bundles;
    \item $r=3$ and $\vf:X\to B$ is a fibrational contraction over a projective space $B\cong\P^m$, with divisorial exceptional locus and fiber given by a projective space $F\cong\P^{n-m}$;
    \item $r=3$ and $\vf:X\to B$ is a fiber type contraction over a projective space $B\cong \P^m$, which is neither a fibration nor a fibrational contraction.
  \end{enumerate}
  When $n=\dim(X)\leq 3$ the last case (5) cannot occur and $\varphi:X\to B$ is either a fibration or a fibrational contraction.
\end{theorem}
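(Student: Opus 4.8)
The plan is to prove the two main parts of Theorem~\ref{thm:intro} --- existence of the fibre type contraction and the ensuing classification --- together with the refinement for threefolds, in four stages, the first of which is the genuinely new and delicate one. \emph{Stage~1 (combinatorial core).} Working with Batyrev's primitive collections and primitive relations of the smooth complete variety $X(\Si)$, the first task is to prove that when $r\le3$ there always exists a primitive collection $P$ that is \emph{nef}, meaning that its primitive relation $r(P)$ spans an extremal ray of $\NE(X)$ lying on a supporting hyperplane $H$ of $\Nef(X)$ with $\dim(H\cap\Nef(X))\ge1$; equivalently, $X$ carries a non--trivial non--big nef divisor. For $r\le2$ this can be read off Kleinschmidt's structure theorem (the ``fibre'' primitive collection works). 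For $r=3$ I would argue on the mutual position, in $N_1(X)_\R\cong\R^3$, of the finitely many primitive relations: since these generate the three--dimensional cone $\NE(X)$ and are constrained by the combinatorial conditions equivalent to smoothness and completeness of $\Si$, a careful case analysis should show that at least one facet of $\NE(X)$ is spanned by such classes $r(P)$. This enumeration, which is exactly where the hypothesis $r\le3$ is used, is the step I expect to be the main obstacle.

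\emph{Stage~2 (the contraction).} The face $F=H\cap\Nef(X)$ of $\Nef(X)$ determines a toric contraction $\vf=\vf_F:X\twoheadrightarrow B$ with connected fibres onto a complete $\Q$--factorial toric variety $B$, with $\Pic(B)_\R$ identified with the linear span of $F$, so that $r_B=\dim F\le r-1$. Nefness of $P$ says precisely that the curves contracted by $\vf$ sweep out all of $X$, hence $\vf$ is of fibre type and $m:=\dim B\le n-1$; inspecting the primitive relation of $P$ against the smoothness of $\Si$ shows that $B$ is smooth as well, its defining fan being a smooth quotient fan.

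\emph{Stage~3 (identification of $B$ and classification).} Since $r_B\le r-1\le2$: if $r_B\le1$ then $B$ is a smooth complete toric variety of Picard number $\le1$, hence $B\cong\P^m$; if $r_B=2$ then by Kleinschmidt's theorem $B$ is the projectivization of a decomposable bundle over a projective space, i.e.\ a PTB over a projective space. This proves the first assertion. For the classification, $r=1$ is the classical fact $X\cong\P^n$, and $r=2$ is Kleinschmidt's theorem itself, which exhibits $X$ as a PTB over $\P^m$ with $\vf$ the bundle projection --- case~(2). For $r=3$ one would examine how the contraction factors through the one--dimensional faces of the facet $F$ of $\Nef(X)$: if it can be realized as a fibration over a Picard number $2$ base, then iterating part~(2) exhibits $X$ as a tower of two projectivizations of decomposable bundles --- case~(3); if it factors as a divisorial contraction followed by a fibration onto $\P^m$, with divisorial exceptional locus and general fibre $\P^{n-m}$, it is a fibrational contraction --- case~(4); the remaining possibility is a fibre type contraction onto $\P^m$ that is neither a fibration nor a fibrational contraction --- case~(5).

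\emph{Stage~4 (the threefold case).} When $n\le3$, every fibre of $\vf$ has dimension $1$ or $2$. A fibre type toric contraction of a smooth complete threefold with one--dimensional fibres is a $\P^1$--bundle, hence a fibration, and a two--dimensional smooth complete toric fibre of Picard number $\le2$ is $\P^2$ or a Hirzebruch surface; a short case analysis over $r\in\{2,3\}$ then shows that $\vf$ is, up to at most one divisorial blow--down, a projective bundle. Hence case~(5) cannot occur when $n\le3$, and $\vf$ is either a fibration or a fibrational contraction.
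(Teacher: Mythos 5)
Your four-stage outline captures the correct overall architecture (find a nef primitive collection whose support hyperplane meets $\Nef(X)$ nontrivially, contract the corresponding face, identify the base, refine for $n\le3$), and this matches the paper's reduction of Theorem~\ref{thm:intro} to Theorem~\ref{thm:} together with the case analysis of \S\,\ref{ssez:intbord}. But there is a genuine gap at the very point you flag yourself: Stage~1 for $r=3$ \emph{is} Theorem~\ref{thm:}, the main result of the paper, and you do not prove it --- you merely state that ``a careful case analysis should show'' the conclusion. The paper spends all of \S\,\ref{sez:r=3} on this (Lemmas~\ref{lm:bordante} and~\ref{lm:intersezione}, Corollary~\ref{cor:intersezione}, and the determinant computation using regularity of $\Si$ to rule out the configuration of Fig.~\ref{fig3}); alternatively it invokes Batyrev's classification of the $3$ or $5$ primitive relations in \S\,\ref{ssez:batyrev}. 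Either of these is a substantial argument that cannot be waved away.

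Two further points. First, your Stage~1 statement is stronger than what is needed and would in fact be false: you ask for a nef primitive collection $\pc$ whose relation \emph{spans an extremal ray of $\NE(X)$}, i.e.\ whose support hyperplane $H_P$ cuts a \emph{facet} of $\Nef(X)$ (the maximally bordering case). Theorem~\ref{thm:} only asserts $\dim(H_P\cap\Nef(X))\ge1$, and the distinction matters: cases (4) and (5) of the classification occur precisely when $H_P$ meets $\Nef(X)$ only along a ray, not a facet. If every smooth $X$ with $r=3$ were maximally bordering, cases (4) and (5) would be empty. Second, your Stage~4 argument that a fibre-type toric contraction of a smooth threefold with one-dimensional fibres ``is a $\P^1$-bundle, hence a fibration'' does not address the actual obstruction: what must be excluded is the possibility that $\vf$ factors through a \emph{small} birational contraction (case (5), which corresponds to $\g_{\Si}$ being separated from the boundary of $\Mov(X)$ by an internal wall). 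The paper rules this out when $n\le3$ by a counting argument (one needs at least $7$ columns of $Q$, hence $n+3\ge7$, i.e.\ $n\ge4$), which is not the same as, and is not implied by, a statement about the dimension of the fibres of the composite map $\vf$.
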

In the previous statement the word \emph{complete} is synonymous of \emph{projective}, by a celebrated result of P.~Kleinschmidt and B.~Sturmfels \cite{Kleinschmidt-Sturmfels}. The classification given in the second part of the statement descends from the first part by means, point by point,  of the following considerations.
\begin{enumerate}
\item The unique smooth and complete toric variety of Picard number (in the following also called \emph{rank}) $r=1$ is the projective space.
\item For $r=2$, the Kleinschmidt classification \cite{Kleinschmidt} exhibits every smooth complete toric variety of rank $r=2$ as a PTB over a projective space. Alternatively this point can be obtained as a byproduct of considerations given for the following part (3).
\item For $r=3$, the Kleinschmidt classification has been extended by V.~Batyrev to those smooth and complete toric varieties admitting a numerically effective (nef) \emph{primitive collection} (see the following Definition \ref{def:pc}) which is disjoint from any further primitive collection \cite[Prop.\,4.1]{Batyrev91}: this gives a PTB over a smooth toric base of Picard number $r=2$.\\
Recently, the fan's property of admitting a nef primitive collection disjoint from any further primitive collection, has been shown to be equivalent, in the projective setup, with the property of being \emph{maximally bordering} for the corresponding fan chamber $\g\cong\Nef(X)$ of the secondary fan \cite[Prop.\,3.25]{RT-Qfproj}, where \emph{bordering} means that $\g$ has non--trivial intersection with a facet of the pseudo--effective cone $\overline{\Eff}(X)$ and \emph{maximally} means that this intersection has maximal dimension (see the following Definition~\ref{def:bordering}). Since $\g\cong\Nef(X)$ (see either the following Proposition~\ref{prop:nef} or, better, \cite[Thm.\,15.1.10]{CLS}), non-trivial classes in $\Nef(X)\cap\partial\overline{\Eff}(X)$ determine nef and non-big divisors whose associated morphisms give a fiber type contraction. Namely, on the one hand, the maximally bordering case dually determines an extremal ray of $\NE(X)$ whose contraction exhibits $X$ as a PTB (see \cite[Thm\,1.5]{Reid83}, \cite[Cor.\,2.4]{Casagrande}), matching the above mentioned Batyrev's description: this fact also follows from \cite[Cor.~3.23]{RT-Qfproj} which, in addition, gives a complete description of the base and the fibre of the involved fibration.
\item[(4),(5)] On the other hand, the non-maximally bordering case exhibits an extremal ray of $\Nef(X)$, which turns out to be the pull back $\vf^*(\Nef(B))$ by a \emph{rational contraction} $\vf$, according with \cite[Prop.\,1.11.(3)]{Hu-Keel}. In particular considerations following \cite[Prop.\,2.5]{Casagrande13} prove that the associated rational contraction $\vf$ is actually regular. Moreover the bordering condition for $\g=\Nef(X)$ implies that $\vf$ is a fiber type contraction over a projective space $B\cong\P^m$, since $\dim(\vf^*(\Nef(B)))=1$: further details clarifying the difference between (4) and (5) are given in \S\,\ref{ssez:intbord}.(2).
    \end{enumerate}
    Therefore the proof of Theorem \ref{thm:intro} reduces to prove the following
    \halfline
    \noindent\textbf{Theorem \ref{thm:}} \emph{Let $X(\Si)$ be a smooth, complete, toric variety of rank $r\leq 3$ and let $\g_{\Si}=\Nef(X)$ be the associated chamber of the secondary fan. Then $\Si$ admits a nef primitive collection $\pc$ such that $\g_{\Si}$ is a bordering chamber with respect to the support hyperplane $H_P$ of $\pc$ (see Def.~\ref{def:support}), i.e. $\dim(\Nef(X)\cap H_P)\geq 1$ and $H_P$ cuts out a facet of the pseudo--effective cone $\Eff(X)$. In particular $X$ admits a non-trivial nef and non-big divisor.}
    \halfline

    Notice that the existence of a nef primitive collection $\pc$ of $\Si$ follows from \cite[Prop.~3.2]{Batyrev91}. Then the statement is obvious for $r=1$. For $2\leq r \leq 3$ the proof is obtained by means of $\Z$--linear Gale duality applied to the geometry of the secondary fan, as developed in \cite{RT-LA&GD} and \cite{RT-Qfproj}.

    For what concerns higher values of the Picard number $r$, let us point out that there is no hope of extending Theorem \ref{thm:} for $r\geq 5$: in \cite{FS} Fujino and Sato exhibited extremely ingenious examples of smooth projective toric varieties of any dimension $n\geq 3$ and rank $r\geq 5$ whose non-trivial nef divisors are big i.e. such that $\Nef(X)\cap\partial\overline{\Eff}(X)=\{0\}$. It remains then to understand what happens for:
    \begin{itemize}
      \item smooth complete and non-projective toric varieties of rank $r\geq 4$: this problem can be easily settled by observing that the famous Oda's example of a smooth complete and non-projective toric variety \cite[Prop.~9.4]{OM},\cite[p.~84]{Oda}, exhibits a 3-folds of Picard number 4 whose $\Nef$ cone is a 2-dimensional one completely internal to the pseudo-effective cone, hence showing that every non-trivial nef divisor is big; this example can be easily generalized on dimension and rank;
      \item smooth projective toric varieties of rank $r=4$: a classical Oda-Miyake result \cite[Thm.~9.6]{OM}, allows us to extend the statement of Theorem~\ref{thm:} to every projective toric \emph{threefold} (i.e. smooth 3-dimensional variety) $X$ of Picard number 4: see Theorem~\ref{thm:OM}; in particular every such threefold turns out to admitting non-big and non-trivial nef divisors; consequently we get a classification in terms of fibred type birational contractions of all toric threefolds of rank $r\leq 4$: see Theorem~\ref{thm:n=3,r=4}; based on such evidence, one might hope that Theorem \ref{thm:} could be extended to the case of projective, toric $n$-fold of rank $r=4$; unfortunately this is not the case and in \S~\ref{ssez:esempio} we give a sharp example of a projective toric 4-fold $X$ of rank $r=4$ such that $\Nef(X)\cap\partial\overline{\Eff}(X)=\{0\}$: this $X$ turns out to be already a \emph{weak Fano} toric 4-fold; such an example can be easily generalized on every dimension $n\geq 4$ (see \S~\ref{ssez:generalizzazione}), unfortunately loosing the weak Fano condition when dimension is greater than 4.
    \end{itemize}
    We believe that the latter is an interesting improvement of Fujino and Sato results, since ``in general, it seems to be hard to find those examples'' \cite[pg.~1]{FS}. In fact examples in \S~\ref{ssez:esempio} and \S~\ref{ssez:generalizzazione} have to be considered among main results of the present paper. Let us remark that we could produce them only by means of Maple procedures stressing smoothness conditions and combinatoric a\-na\-ly\-sis on the secondary fan by employing $\Z$-linear Gale duality's techniques, as developed in \cite{RT-LA&GD}. We then performed a geometric analysis of such an example following \cite{RT-Qfproj}. The construction of examples in \S~\ref{ssez:esempio} and \S~\ref{ssez:generalizzazione} shows the utility of  the employment of computational geometry routines in definitely setting open theoretic questions. In fact it would be probably impossible to find those examples by hand, following appropriate ideas and geometric constructions, as Fujino and Sato did in \cite{FS} for examples of Picard number greater than 4.

    The paper is organized as follows. After some preliminaries, in \S\,\ref{sez:pre} we state Theorem \ref{thm:}. \S\,\ref{sez:r=2} and \S\,\ref{sez:r=3} are devoted to prove this theorem when $r=2$ and $r=3$, respectively. \S\,\ref{ssez:intbord} is dedicated to giving necessary details proving Theorem~\ref{thm:intro} starting from Theorem \ref{thm:}. In \S\,\ref{ssez:ex} we discuss two examples giving all the possible situations occurring in the classification of Theorem~\ref{thm:intro}. Finally \S\,\ref{sez:r>3} is devoted to discussing the generalization of Theorem~\ref{thm:} to higher values of the Picard number.

\section{Preliminaries, notation and the general result}\label{sez:pre}
In the present paper we deal with either $\Q$--factorial or smooth projective toric varieties, hence associated with either simplicial or regular, respectively, complete fans. For preliminaries and used notation on toric varieties we refer the reader to \cite[\S~1.1]{RT-LA&GD}. We will also apply $\Z$--linear Gale duality as developed in \cite[\S~2]{RT-LA&GD}.
Every time the needed nomenclature will be recalled either directly by giving the necessary definition or by reporting the precise reference. Here is a list of main notation and relative references:

\subsection{List of notation}\label{ssez:lista}\hfill\\
Let $X(\Si)$ be a $n$--dimensional toric variety and $T\cong(\C^*)^n$ the acting torus, then
\begin{eqnarray*}
  &M,N,M_{\R},N_{\R}& \text{denote the \emph{group of characters} of $T$, its dual group and}\\
  && \text{their tensor products with $\R$, respectively;} \\
  &\Si\subseteq N_{\R}& \text{is the fan defining $X$;} \\
  &\Si(i)& \text{is the \emph{$i$--skeleton} of $\Si$, which is the collection of all the}\\
  && \text{$i$--dimensional cones in $\Si$;} \\
  &r=\rk(X)&\text{is the Picard number of $X$, also called the \emph{rank} of $X$};\\
  &F^r_{\R}& \cong\R^r,\ \text{is the $\R$--linear span of the free part of $\Cl(X(\Si))$};\\
  &F^r_+& \text{is the positive orthant of}\ F^r_{\R}\cong\R^r;\\
  &\langle\v_1,\ldots,\v_s\rangle\subseteq\N_{\R}& \text{denotes the cone generated by the vectors $\v_1,\ldots,\v_s\in N_{\R}$;}\\
  && \text{if $s=1$ then this cone is also called the \emph{ray} generated by $\v_1$;} \\
  &\mathcal{L}(\v_1,\ldots,\v_s)\subseteq N& \text{denotes the sublattice spanned by $\v_1,\ldots,\v_s\in N$;}\\
  &\mathcal{L}_{\R}(\v_1,\ldots,\v_s)& :=\mathcal{L}(\v_1,\ldots,\v_s)\otimes_{\Z}\R \,.
  \end{eqnarray*}
Let $A\in\mathbf{M}(d,m;\Z)$ be a $d\times m$ integer matrix, then
\begin{eqnarray*}
  &\mathcal{L}_r(A)\subseteq\Z^m& \text{denotes the sublattice spanned by the rows of $A$;} \\
  &\mathcal{L}_c(A)\subseteq\Z^d& \text{denotes the sublattice spanned by the columns of $A$;} \\
  &A_I\,,\,A^I& \text{$\forall\,I\subseteq\{1,\ldots,m\}$ the former is the submatrix of $A$ given by}\\
  && \text{the columns indexed by $I$ and the latter is the submatrix of}\\
  && \text{$A$ whose columns are indexed by the complementary }\\
  && \text{subset $\{1,\ldots,m\}\backslash I$;} \\
  &\REF& \text{Row Echelon Form of a matrix;}\\
  &\text{\emph{positive}}\ (\geq 0)& \text{a matrix (vector) whose entries are non-negative.}\\
\end{eqnarray*}
Given a $F$--matrix $V=(\v_1,\ldots,\v_{n+r})\in\mathbf{M}(n,n+r;\Z)$ (see Definition \ref{def:Fmatrice} below), then
\begin{eqnarray*}
  &\langle V\rangle& =\langle\v_1,\ldots,\v_{n+r}\rangle\subseteq N_{\R}\ \text{denotes the cone generated by the columns of $V$;} \\
  &\langle V\rangle(i)& =\ \text{is the set of all the $i$--dimensional faces of}\ \langle V\rangle\,;\\
  &\SF(V)& =\SF(\v_1,\ldots,\v_{n+r})\ \text{is the set of all rational simplicial fans $\Si$ such that}\\
  && \text{$\Sigma(1)=\{\langle\v_1\rangle,\ldots,\langle\v_{n+r}\rangle\}\subset N_{\R}$ and $|\Si|=\langle V\rangle$ \cite[Def.~1.3]{RT-LA&GD};}\\
  &\P\SF(V)&:=\{\Si\in\SF(V)\ |\ X(\Si)\ \text{is projective}\};\\
  &\G(V)&=Q\ \text{is a \emph{Gale dual} matrix of $V$ \cite[\S~3.1]{RT-LA&GD};} \\
  &\CQ&=\langle\G(V)\rangle \subseteq F^r_+\ \text{is a \emph{Gale dual cone} of}\ \langle V\rangle:\ \text{it is always assumed to be}\\
  && \text{generated in $F^r_{\R}$ by the columns of a positive $\REF$ matrix $Q=\G(V)$}\\
  && \text{(see \cite[Thms.~3.8,~3.18]{RT-LA&GD})}.
\end{eqnarray*}

\subsection{$\Z$--linear Gale duality}
 Let us start by recalling the $\Z$--linear interpretation of Gale duality following the lines of \cite[\S~3]{RT-LA&GD}, to which the interested reader is referred for any further detail.

\begin{definition} A $n$--dimensional $\Q$--factorial complete toric variety $X=X(\Si)$ of rank $r$ is the toric variety defined by a $n$--dimensional \emph{simplicial} and \emph{complete} fan $\Si$ such that $|\Si(1)|=n+r$ \cite[\S~1.1.2]{RT-LA&GD}. In particular the rank $r$ coincides with the Picard number i.e. $r=\rk(\Pic(X))$. A matrix $V\in\Mat(n,n+r;\Z)$, whose columns are given by elements of the monoids $\rho\cap N$, one for every $\rho\in\Si(1)$, is called a \emph{fan matrix of $X$}. If every such element is actually a generator of the associated monoid $\rho\cap N$, i.e. it is a \emph{primitive} element of the ray $\rho$, then $V$ is called a \emph{reduced} fan matrix.
\end{definition}

\begin{definition}\cite[Def.~3.10]{RT-LA&GD}\label{def:Fmatrice} An \emph{$F$--matrix} is a $n\times (n+r)$ matrix  $V$ with integer entries, satisfying the conditions:
\begin{itemize}
\item[a)] $\rk(V)=n$;
\item[b)] $V$ is \emph{$F$--complete} i.e. $\langle V\rangle=N_{\R}\cong\R^n$ \cite[Def.~3.4]{RT-LA&GD};
\item[c)] all the columns of $V$ are non zero;
\item[d)] if ${\bf  v}$ is a column of $V$, then $V$ does not contain another column of the form $\lambda  {\bf  v}$ where $\lambda>0$ is real number.
\end{itemize}
A \emph{$CF$--matrix} is a $F$-matrix satisfying the further requirement
\begin{itemize}
\item[e)] the sublattice ${\mathcal L}_c(V)\subset\Z^n$ is cotorsion free, which is ${\mathcal L}_c(V)=\Z^n$ or, equivalently, ${\mathcal L}_r(V)\subset\Z^{n+r}$ is cotorsion free.
\end{itemize}
A $F$--matrix $V$ is called \emph{reduced} if every column of $V$ is composed by coprime entries \cite[Def.~3.13]{RT-LA&GD}. A (reduced) fan matrix of a $\Q$--factorial complete toric variety $X(\Si)$ is always a (reduced) $F$--matrix.
\end{definition}

\begin{definition}\cite[Def.~3.9]{RT-LA&GD}\label{def:Wmatrix} A \emph{$W$--matrix} is an $r\times (n+r)$ matrix $Q$  with integer entries, satisfying the following conditions:
\begin{itemize}
\item[a)] $\rk(Q)=r$;
\item[b)] ${\mathcal L}_r(Q)$ has not cotorsion in $\Z^{n+r}$;
\item[c)] $Q$ is \emph{$W$--positive}, which is $\mathcal{L}_r(Q)$ admits a basis consisting of positive vectors (see list \ref{ssez:lista} and \cite[Def.~3.4]{RT-LA&GD}).
\item[d)] Every column of $Q$ is non-zero.
\item[e)] ${\mathcal L}_r(Q)$   does not contain vectors of the form $(0,\ldots,0,1,0,\ldots,0)$.
\item[f)]  ${\mathcal L}_r(Q)$ does not contain vectors of the form $(0,a,0,\ldots,0,b,0,\ldots,0)$, with $ab<0$.
\end{itemize}
A $W$--matrix is called \emph{reduced} if $V=\G(Q)$ is a reduced $F$--matrix \cite[Def.~3.14, Thm.~3.15]{RT-LA&GD}.

\noindent The Gale dual matrix $Q=\G(V)$ of a fan matrix $V$ of a $\Q$--factorial complete toric variety $X(\Si)$ is a $W$--matrix called  a \emph{weight matrix of $X$}.
\end{definition}

\begin{definition}\cite[Def.~2.7]{RT-LA&GD}\label{def:PWS} A \emph{poli--weighted space} (PWS) is a $n$--dimensional $\Q$--factorial complete toric variety $Y(\widehat{\Si})$ of rank $r$, whose reduced fan matrix $\widehat{V}$ is a $CF$--matrix i.e. if
\begin{itemize}
  \item $\widehat{V}$ is a $n\times(n+r)$ $CF$--matrix,
  \item $\widehat{\Si}\in\SF(\widehat{V})$.
\end{itemize}
Let us recall that a $\Q$--factorial complete toric variety $Y$ is a PWS if and only if it is \emph{1-connected in codimension 1} \cite[\S~3]{Buczynska}, which is $\pi^1_1(Y)\cong\Tors(\Cl(Y))=0$ \cite[Thm.~2.1]{RT-QUOT}. In particular a smooth complete toric variety is always a PWS.
\end{definition}

\subsection{The secondary fan}\label{sez:secondary}
Let us now recall the $\Z$--linear interpretation of the secondary fan, as done in  \cite[\S~1.2]{RT-Qfproj}, to which the interested reader is referred for any further detail.

\begin{definition}\label{def:GKZ&Mov}\cite[Def.~1.6]{RT-Qfproj} Let $Q$ be a positive, REF and reduced $W$-matrix and $\CQ=\langle Q\rangle$. Let $\mathcal{S}_r$ be the family of all the $r$--dimensional subcones of $\CQ$ obtained as intersection of simplicial subcones of $\CQ$ generated by columns of $Q$. Then define the \emph{secondary fan} (or \emph{GKZ decomposition}) of $Q$ to be the set $\Ga=\Ga(Q)$ of cones in $F^r_+$ such that
\begin{itemize}
  \item its subset of $r$--dimensional cones (the \emph{$r$--skeleton}) $\Ga(r)$ is composed by the minimal elements, with respect to the inclusion, of the family $\mathcal{S}_r$,
  \item its subset of $i$--dimensional cones (the \emph{$i$--skeleton}) $\Ga(i)$ is composed by all the $i$--dimensional faces of cones in $\Ga(r)$, for every $1\leq i\leq r-1$.
\end{itemize}
 A maximal cone $\g\in\Ga(r)$ is called a \emph{chamber} of the secondary fan $\Ga$. Finally define
\begin{equation}\label{Mov}
    \Mov(Q):= \bigcap_{i=1}^{n+r}\left\langle Q^{\{i\}}\right\rangle\ ,
\end{equation}
where $\left\langle Q^{\{i\}}\right\rangle$ is the cone generated in $F^r_+$ by the columns of the submatrix $Q^{\{i\}}$ of $Q$ (see the list of notation \ref{ssez:lista}).
\end{definition}

The fact that $\Ga$ is actually a fan is a consequence of the following

\begin{theorem}\label{thm:GKZ}
If $V$ is a reduced $F$--matrix and $Q=\G(V)$ is a positive and REF $W$-matrix  then, for every $\Si\in\SF(V)$,
\begin{enumerate}
  \item $\CQ=\overline{\Eff}(X(\Si))$, the \emph{pseudo--effective cone of $X$}, which is the closure of the cone generated by effective Cartier divisor classes of $X$ \cite[Lemma~15.1.8]{CLS},
  \item $\Mov(Q)=\overline{\Mov}(X(\Si))$, the closure of the cone generated by movable Cartier divisor classes of $X$ \cite[(15.1.5), (15.1.7), Thm.~15.1.10, Prop.~15.2.4]{CLS}.
  \item $\Ga(Q)$ is the secondary fan (or GKZ decomposition) of $X(\Si)$ \cite[\S~15.2]{CLS}. In particular $\Ga$ is a fan and $|\Ga|=\CQ\subset F^r_+$.
\end{enumerate}
\end{theorem}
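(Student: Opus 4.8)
The plan is to read Theorem~\ref{thm:GKZ} as a dictionary between the geometry of $X(\Si)$ and the $\Z$--linear Gale duality of $V$ and $Q=\G(V)$, and to transcribe the standard descriptions of the class group, of the pseudo--effective cone, of the movable cone and of the GKZ decomposition of a $\Q$--factorial complete toric variety, as recalled in \cite[\S\,15.1--15.2]{CLS}. The point to keep in mind throughout is that none of these objects actually depends on the particular fan $\Si\in\SF(V)$, but only on the collection of rays, hence on $V$ alone; this is what makes the ``for every $\Si\in\SF(V)$'' legitimate.

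First I would fix $\Si\in\SF(V)$, let $D_1,\dots,D_{n+r}$ be the torus--invariant prime divisors with $D_i$ associated with the ray $\langle\v_i\rangle$ generated by the $i$--th column of $V$, and write down the divisor class exact sequence
\[
0\longrightarrow M\longrightarrow\Z^{n+r}\stackrel{d}{\longrightarrow}\Cl(X(\Si))\longrightarrow 0\,,\qquad d(\e_i)=[D_i]\,,
\]
whose first map is given by $V$ in the usual way. Passing to the free quotient $F^r\cong\Cl(X(\Si))/\Tors$, the induced surjection $\Z^{n+r}\to F^r$ is — by the very construction of $Q=\G(V)$ and the defining properties of a $W$--matrix, namely $\rk(Q)=r$ and $\mathcal{L}_r(Q)$ cotorsion free in $\Z^{n+r}$ — represented, once the positive REF representative $Q$ has been fixed, by $Q$ itself; equivalently $[D_i]\in F^r_{\R}$ is precisely the $i$--th column of $Q$. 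Since $\Cl(X(\Si))$, its free part $F^r$ and the classes $[D_i]$ involve $V$ only, the cones appearing below coincide for every choice of $\Si\in\SF(V)$.

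Part (1) is then \cite[Lemma\,15.1.8]{CLS}: on a complete toric variety the effective cone is generated by the classes of the torus--invariant prime divisors, hence here by the columns of $Q$, so $\Eff(X(\Si))=\langle Q\rangle=\CQ$; being finitely generated this cone is closed, whence $\overline{\Eff}(X(\Si))=\CQ$. Part (2) follows from \cite[(15.1.5),(15.1.7),Thm.\,15.1.10,Prop.\,15.2.4]{CLS}: a class $\a\in F^r_{\R}$ is movable exactly when it lies in $\bigcap_{i=1}^{n+r}\langle[D_j]:j\neq i\rangle$, and under the identification above this intersection is $\bigcap_{i=1}^{n+r}\langle Q^{\{i\}}\rangle=\Mov(Q)$, so $\overline{\Mov}(X(\Si))=\Mov(Q)$. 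For part (3), by \cite[\S\,15.2]{CLS} the maximal cones of the GKZ decomposition of $X(\Si)$ are the GIT chambers, i.e. the minimal cells of the common refinement, inside $\CQ$, of the simplicial subcones of $\CQ$ spanned by those $r$--subsets of columns of $Q$ which are bases of $F^r_{\R}$; this is exactly the family $\mathcal{S}_r$ entering Definition~\ref{def:GKZ&Mov}, whose lower skeleta are the faces of its cones, so $\Ga(Q)$ is the secondary fan of $X(\Si)$. Being a GKZ decomposition it is automatically a fan, and its support is $\overline{\Eff}(X(\Si))=\CQ$ by part (1).

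The only step where something genuinely has to be verified — and the one I expect to be the main obstacle — is the matching in part (3): that the purely matrix--theoretic family $\mathcal{S}_r$ of Definition~\ref{def:GKZ&Mov} reproduces \emph{on the nose} the GIT / secondary fan chamber structure of \cite[\S\,15.2]{CLS}, together with the bookkeeping of the torsion of $\Cl(X(\Si))$ — present because $X(\Si)$ is only assumed $\Q$--factorial, but harmless since all our cones live in the free part $F^r_{\R}$. Everything else is a direct transcription of \cite[\S\,15.1--15.2]{CLS} through the exact sequence above.
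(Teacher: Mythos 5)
Your proposal is correct and takes essentially the same route as the paper: Theorem~\ref{thm:GKZ} is in substance a transcription of \cite[\S\,15.1--15.2]{CLS} under the Gale--duality identification $d(\e_i)=[D_i]=\q_i\in F^r_{\R}$ coming from the exact sequence (\ref{div-sequence}) and its dual (\ref{HomZ-div-sequence}), and the paper proves nothing beyond the citations embedded in the statement itself. Your explicit spelling out of that identification, together with the observation that all three cones depend only on the ray set (hence on $V$, not on the choice of $\Si\in\SF(V)$) and that the torsion of $\Cl(X(\Si))$ is harmless because everything lives in the free part $F^r_{\R}$, simply fills in what the authors leave implicit.
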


\begin{theorem}\emph{\cite[Prop. 15.2.1]{CLS}}\label{thm:Gale sui coni} There is a one to one correspondence between the following sets
\begin{eqnarray*}
    \mathcal{A}_{\Ga}(Q)&:=&\{\g\in\Ga(r)\ |\ \g\subset\Mov(Q)\}\\
    \P\SF(V)&:=&\{\Si\in\SF(V)\ |\ X(\Si)\ \text{is projective}\}\ .
\end{eqnarray*}
\end{theorem}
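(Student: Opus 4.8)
This is \cite[Prop.~15.2.1]{CLS}, so in the paper one may simply invoke it; were I to prove it from the tools assembled above, the plan would be to exhibit the bijection explicitly through the classical Gelfand--Kapranov--Zelevinsky ``regular subdivision'' construction, read off in the $\Z$--linear Gale dual picture, and then to check that the two assignments are inverse to one another. The forward map will send a projective fan to its Nef cone, the backward map will send a chamber to the fan of cones singled out by a height vector in its relative interior, and the content of Theorem~\ref{thm:GKZ} ($\CQ=\overline{\Eff}$, $\Mov(Q)=\overline{\Mov}$, $\Ga(Q)$ the secondary fan) is what glues the two together.

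\emph{The map $\P\SF(V)\to\mathcal{A}_\Ga(Q)$.} Given $\Si\in\P\SF(V)$ I would send it to $\g_\Si:=\Nef(X(\Si))\subseteq\Cl(X(\Si))_\R=F^r_\R$, the identification being legitimate since $X(\Si)$ is $\Q$--factorial. The first step is the standard Gale dual description of the Nef cone: pushing the Cartier--plus--convexity condition for a torus--invariant divisor $D=\sum a_iD_i$ through the Gale dual exact sequence yields
\[
\Nef(X(\Si))=\bigcap_{\s\in\Si(n)}\bigl\langle Q^{I(\s)}\bigr\rangle ,
\]
where $I(\s)\subseteq\{1,\dots,n+r\}$, $|I(\s)|=n$, indexes the rays of $\s$ and $Q^{I(\s)}$ is the submatrix of $Q$ collecting the $r$ complementary columns; an \emph{ample} class is one lying in the relative interior of every $\langle Q^{I(\s)}\rangle$, i.e.\ giving a strictly convex support function (cf.\ \cite[Thm.~15.1.10]{CLS}, Proposition~\ref{prop:nef}). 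Each $\langle Q^{I(\s)}\rangle$ is an $r$--dimensional simplicial subcone of $\CQ$ generated by columns of $Q$, because $F$--linear independence of $\{\v_i:i\in I(\s)\}$ translates under Gale duality into linear independence of the complementary columns of $Q$; hence $\g_\Si$ lies in the family $\mathcal{S}_r$ of Definition~\ref{def:GKZ&Mov}, it is full--dimensional by projectivity, and — as is standard, and part of the GKZ package — $\Nef(X(\Si))$ is a single minimal element of $\mathcal{S}_r$, so $\g_\Si\in\Ga(r)$. Finally $\Si$ uses all $n+r$ rays, so each index $i$ sits inside some $I(\s)$ and therefore $\g_\Si\subseteq\langle Q^{I(\s)}\rangle\subseteq\langle Q^{\{i\}}\rangle$; by \eqref{Mov} this gives $\g_\Si\subseteq\Mov(Q)$, i.e.\ $\g_\Si\in\mathcal{A}_\Ga(Q)$.

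\emph{The inverse map $\mathcal{A}_\Ga(Q)\to\P\SF(V)$.} Given $\g\in\mathcal{A}_\Ga(Q)$ I would pick $\a\in\Relint\g$ and form
\[
\Si_\a:=\bigl\{\,\text{faces of }\langle V_I\rangle\ :\ I\subseteq\{1,\dots,n+r\},\ |I|=n,\ \a\in\Relint\langle Q^{I}\rangle\,\bigr\}.
\]
First, $\Si_\a$ is independent of the choice of $\a\in\Relint\g$, because the conditions ``$\a\in\Relint\langle Q^{I}\rangle$'' are constant on the relative interior of a chamber (chambers being, by Definition~\ref{def:GKZ&Mov}, the minimal cones in the common refinement of the column--simplicial cones of $\CQ$). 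The crucial point is that $\Si_\a$ is a fan: if $\langle V_I\rangle,\langle V_J\rangle\in\Si_\a$ then, $\a$ lying in both $\Relint\langle Q^{I}\rangle$ and $\Relint\langle Q^{J}\rangle$, the local characters $m_I,m_J\in M_\R$ determined by $\a$ agree along $\langle V_I\rangle\cap\langle V_J\rangle$ and dominate the remaining height values elsewhere, which is exactly the statement that $\a$ induces a coherent (regular) subdivision of the configuration of columns of $V$. Completeness $|\Si_\a|=\langle V\rangle=N_\R$ follows from $F$--completeness of $V$ together with $\g$ being full--dimensional; and the hypothesis $\g\subseteq\Mov(Q)$ is precisely what forces $\Si_\a$ to use all $n+r$ rays (if $\langle\v_i\rangle$ were unused, every $\langle Q^{I}\rangle$ through $\a$ would use the $i$--th column of $Q$, contradicting $\a\in\langle Q^{\{i\}}\rangle$). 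Hence $\Si_\g:=\Si_\a\in\SF(V)$, and as $\a$ is a strictly convex support function on it, $X(\Si_\g)$ is projective: $\Si_\g\in\P\SF(V)$.

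\emph{Inverseness, and the main obstacle.} For a chamber $\g$, points of $\Relint\g$ are ample on $\Si_\g$, so $\g\subseteq\Nef(X(\Si_\g))=\g_{\Si_\g}$; both being chambers of $\Ga$, they coincide. Conversely, for $\Si\in\P\SF(V)$ and $\a$ ample on $\Si$, strict convexity of the associated support function says exactly that every maximal cone of $\Si$ belongs to $\Si_\a=\Si_{\g_\Si}$; since $\Si$ and $\Si_{\g_\Si}$ are both complete simplicial fans supported on the same $n+r$ rays, this inclusion of maximal cones forces $\Si=\Si_{\g_\Si}$. The genuinely non--trivial ingredient, and the step I would expect to absorb the real work, is the coherence assertion — that $\Si_\a$ is a bona fide fan and that its combinatorial type is locally constant along the secondary fan; everything else is bookkeeping with the Gale dual sequence and with Theorem~\ref{thm:GKZ}. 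As already noted, all of this is contained in \cite[Prop.~15.2.1]{CLS}.
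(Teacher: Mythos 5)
Your proposal is correct and takes essentially the same route as the paper: the paper also treats this purely as a citation of \cite[Prop.~15.2.1]{CLS} and, immediately after the statement, records exactly the correspondence you describe --- $\Si\mapsto\g_{\Si}=\bigcap_{\b\in\mathcal{B}_{\Si}}\b$ with $\mathcal{B}_{\Si}=\{\langle Q^I\rangle \mid \langle V_I\rangle\in\Si(n)\}$, and $\g\mapsto\Si_{\g}$ via the bunch of cones --- without reproving the coherence step. Your additional sketch (height vectors in $\Relint\g$, regularity of the induced subdivision, $\g\subseteq\Mov(Q)$ forcing all $n+r$ rays to appear) is precisely the standard GKZ argument that \cite{CLS} carries out, so nothing is missing.
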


For the following it is useful to understand the construction of such a correspondence. Namely (compare \cite[Prop.~15.2.1]{CLS}):
\begin{itemize}
  \item after \cite{Berchtold-Hausen}, given a chamber $\g\in\mathcal{A}_{\Ga}$ let us call \emph{the bunch of cones of $\g$} the collection of cones in $F^r_+$ given by
      \begin{equation*}
        \mathcal{B}(\g):=\left\{\left\langle Q_J\right\rangle\ |\ J\subset\{1,\ldots,n+r\}, |J|=r, \det\left(Q_J\right)\neq 0, \g\subset\left\langle Q_J\right\rangle\right\}
      \end{equation*}
      (see also \cite[p.~738]{CLS}),
  \item it turns out that
  \begin{equation}\label{camera}
    \g=\bigcap_{\b\in\mathcal{B}(\g)}\b\,,
  \end{equation}
  \item  for any $\g\in\mathcal{A}_{\Ga}(Q)$ there exists a unique fan $\Si_{\g}\in\P\SF(V)$ such that
  \begin{equation*}
    \Si_{\g}(n):=\left\{\left\langle V^J\right\rangle\ |\ \left\langle Q_J\right\rangle\in \mathcal{B}(\g)\right\}\,,
  \end{equation*}
  \item for any $\Si\in\P\SF(V)$ the collection of cones
  \begin{equation}\label{bunch}
    \mathcal{B}_{\Si}:=\left\{\left\langle Q^I\right\rangle\ |\ \left\langle V_I\right\rangle\in\Si(n)\right\}
  \end{equation}
  is the bunch of cones of the chamber $\g_{\Si}\in\mathcal{A}_{\Ga}$ given by $\g_{\Si}:=\bigcap_{\b\in\mathcal{B}_{\Si}}\b$.
\end{itemize}
Then the correspondence in Theorem \ref{thm:Gale sui coni} is realized by setting
\begin{equation*}
\begin{array}{ccc}
  \mathcal{A}_{\Ga}(Q) & \longleftrightarrow & \P\SF(V) \\
  \g & \longmapsto & \Si_{\g} \\
  \g_{\Si} & \longmapsfrom & \Si
\end{array}
\end{equation*}
Let us moreover recall that if $V$ is a $CF$--matrix then
\begin{equation}\label{Gale su det}
  \forall\,I\subset\{1,\ldots,n+r\}\ :\ |I|=n \quad |\det(V_I)|=|\det(Q^I)|
\end{equation}
\cite[Cor.\,3.3]{RT-LA&GD}. Therefore a regular fan determines a \emph{regular} bunch of cones and viceversa.

As a final result let us recall the following
\begin{proposition}\emph{\cite[Thm. 15.1.10(c)]{CLS}}\label{prop:nef} If $V=(\v_1\ldots,\v_{n+r})$ is a $F$--matrix then, for every fan $\Si\in\P\SF(V)$, there is a na\-tu\-ral isomorphism $\Pic(X(\Si))\otimes\R\cong F^r_{\R}$ taking the cones
\begin{equation*}
    \Nef(X(\Si))\subseteq\overline{\Mov}(X(\Si))\subseteq\overline{\Eff}(X(\Si))
\end{equation*}
to the cones
\begin{equation*}
    \g_{\Si}\subseteq\Mov(Q)\subseteq\CQ\,.
\end{equation*}
In particular, calling $d:\mathcal{W}_T(X(\Si))\to\Cl(X(\Si))$  the morphism giving to a torus invariant divisor $D$ its linear equivalence class $d(D)$, we get that a Weil divisor $D$ on $X(\Si)$ admits a nef (ample) positive multiple if and only if $d(D)\in\g_{\Si}$ (\,$d(D)\in\Relint\left(\g_{\Si}\right)$, where $\Relint$ denotes the interior of the cone $\g_{\Si}$ in its linear span).
 \end{proposition}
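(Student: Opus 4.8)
The plan is to deduce the proposition as a $\Z$--linear Gale duality reformulation of \cite[Thm.~15.1.10]{CLS}, so that the actual work is matching cones rather than rebuilding toric positivity theory. First I would recall the divisor exact sequence attached to the fan matrix $V=(\v_1,\ldots,\v_{n+r})$: writing $d:\mathcal{W}_T(X(\Si))\to\Cl(X(\Si))$ for the class map and $D_i$ for the torus invariant prime divisor associated to the ray $\langle\v_i\rangle$, one has
\[
0\longrightarrow M\xrightarrow{\;V^T\;}\mathcal{W}_T(X(\Si))=\Z^{n+r}\xrightarrow{\;d\;}\Cl(X(\Si))\longrightarrow 0\,,
\]
exact on the left because $V$ is $F$--complete; for a PWS, e.g. for $X$ smooth, even $\Cl(X(\Si))\cong\Z^r$ is free (Definition \ref{def:PWS}). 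Tensoring with $\R$ and using that $Q=\G(V)$ is a Gale dual, i.e. $VQ^T=0$, $\rk(Q)=r$ and hence $\ker(Q_\R)=\im(V^T_\R)$, the matrix $Q$ regarded as a linear surjection $\mathcal{W}_T(X(\Si))\otimes\R=\R^{n+r}\twoheadrightarrow\R^r$ descends to the asserted natural isomorphism $\Pic(X(\Si))\otimes\R=\Cl(X(\Si))\otimes\R\cong F^r_\R$, under which $d(D_i)$ is carried to the $i$--th column of $Q$.

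Granting this, the two outer identifications are immediate from Theorem \ref{thm:GKZ}. The pseudo--effective cone is generated by the classes $d(D_i)$, hence maps onto $\langle Q\rangle=\CQ$, which is Theorem \ref{thm:GKZ}(1). For movability, a class $d(D)$ with $D=\sum a_iD_i$ lies in $\overline{\Mov}(X(\Si))$ iff for every $i$ it is represented by an effective divisor whose support avoids $D_i$, i.e. iff $d(D)\in\langle Q^{\{i\}}\rangle$ for all $i$; intersecting over $i$ and invoking \eqref{Mov} gives $\overline{\Mov}(X(\Si))\cong\Mov(Q)$, which is Theorem \ref{thm:GKZ}(2). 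Moreover $\g_\Si\subseteq\Mov(Q)$ since $\Si\in\P\SF(V)$ forces $\g_\Si\in\mathcal{A}_\Ga(Q)$ by Theorem \ref{thm:Gale sui coni}.

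The substantial point is the identification $\Nef(X(\Si))\cong\g_\Si$. Here I would use the support--function characterization: a torus invariant Cartier $\Q$--divisor $D=\sum a_iD_i$ is nef iff its support function is upper convex, equivalently iff for every maximal cone $\sigma=\langle\v_i:i\in I\rangle\in\Si(n)$ (so $|I|=n$) there is $m_\sigma\in M_\Q$ with $\langle m_\sigma,\v_i\rangle=-a_i$ for $i\in I$ and $\langle m_\sigma,\v_j\rangle\geq -a_j$ for $j\notin I$. For a fixed $\sigma$ this family of affine--linear conditions, pushed through $Q$, cuts out exactly the cone spanned by the columns of $Q$ complementary to $I$, namely $\langle Q^I\rangle=\langle Q_J\rangle$ with $J=\{1,\ldots,n+r\}\setminus I$; conversely every class of that cone comes from such an $m_\sigma$. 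Intersecting over all $\sigma\in\Si(n)$ yields, in the notation of \eqref{bunch} and \eqref{camera},
\[
\Nef(X(\Si))\ \cong\ \bigcap_{\langle V_I\rangle\in\Si(n)}\langle Q^I\rangle\ =\ \bigcap_{\b\in\mathcal{B}_\Si}\b\ =\ \g_\Si\,,
\]
so the chain $\Nef\subseteq\overline{\Mov}\subseteq\overline{\Eff}$ becomes $\g_\Si\subseteq\Mov(Q)\subseteq\CQ$. The ``in particular'' clause then follows: a Weil divisor $D$ admits a nef positive multiple iff $kD$ is Cartier and nef for some $k>0$, iff $k\,d(D)\in\g_\Si$, iff $d(D)\in\g_\Si$ since $\g_\Si$ is a cone; and by the toric Kleiman criterion ampleness corresponds to the interior of $\Nef$, that is to $\Relint(\g_\Si)$.

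The main obstacle is precisely the Gale--dual translation in the third step: one must check carefully that the family of nef inequalities on the support function, indexed by the maximal cones of $\Si$, dualizes cone by cone into the simplicial subcones $\langle Q^I\rangle$ of $F^r_\R$, and that the degenerate indices $I$ (with $\det V_I=0$, automatically excluded once $\langle V_I\rangle\in\Si(n)$) and the passage between $F$-- and $CF$--matrices disturb neither the natural identifications nor the relint/interior correspondence for ample classes. Everything else is either bookkeeping with the two exact sequences or directly quotable from \cite[Ch.~15]{CLS}, \cite[\S~2]{RT-LA&GD} and \cite[\S~1.2]{RT-Qfproj}; indeed, once the cones of Definition \ref{def:GKZ&Mov} are recognized as the Cox--Little--Schenck secondary--fan cones, the proposition is just \cite[Thm.~15.1.10]{CLS} reread in the present language.
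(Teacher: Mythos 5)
Your argument is correct, but note that the paper offers no proof of this proposition at all: it is stated purely as a quotation of \cite[Thm.~15.1.10(c)]{CLS} (together with \cite[Prop.~15.2.1]{CLS} and the Gale-duality dictionary of \cite{RT-LA&GD}), which is exactly the translation you carry out. Your unpacking of the support-function characterization of nefness into the intersection $\bigcap_{\b\in\mathcal{B}_\Si}\b=\g_\Si$ is the standard CLS argument and is sound, so there is nothing to compare beyond observing that you have reconstructed the cited proof rather than found a different one.
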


 \subsection{Primitive collections and relations}\label{sez:primitive}
 Let $V=(\v_1,\ldots,\v_{n+r})$ be a reduced $F$--matrix and consider a fan $\Si\in\SF(V)$. The datum of a collection of rays $\pc=\{\rho_1,\ldots,\rho_k\}\subseteq\Si(1)$ determines a subset $P=\{j_1,\ldots,j_k\}\subseteq\{1,\ldots,n+r\}$ such that
\begin{equation*}
    \pc=\{\rho_1,\ldots,\rho_k\}=\left\{\langle\v_{j_1}\rangle,\ldots,\langle\v_{j_k}\rangle\right\}
\end{equation*}
and a submatrix $V_P$ of $V$.
By abuse of notation we will often write
\begin{equation*}
    \pc=\{\v_{j_1},\ldots,\v_{j_k}\}=\{V_P\}\,.
\end{equation*}
From the point of view of the Gale dual cone $\CQ=\langle Q\rangle$, where $Q=(\q_1,\ldots,\q_{n+r})=\G(V)$ is a reduced, positive, REF, $W$--matrix, the subset $P\subseteq\{1,\ldots,n+r\}$ determines the collection $\pc^*=\{\langle\q_{j_1}\rangle,\ldots,\langle\q_{j_k}\rangle\}\subseteq\Ga(1)$. By the same abuse of notation we will often write
\begin{equation*}
    \pc^*=\{\q_{j_1},\ldots,\q_{j_k}\}=\{Q_P\}\,.
\end{equation*}
The vector $\v_{\mathcal{P}}:=\sum_{i=1}^k \v_{j_i}$ lies in the relative interior of a cone $\s\in\Si$ and there is a unique relation
\begin{equation}\label{relazione primitiva 2}
    \v_{\mathcal{P}}-\sum_{\rho\in\s(1)}c_{\rho}\v_{\rho}=0\ ,\quad\text{with $c_{\rho}\in\Q\,,\,c_{\rho}> 0$}
\end{equation}
where $\v_{\rho}$ is the column of $V$ generating $\rho\cap N$ as a monoid.
This fact allows us to define a rational vector $r(P)=r(\mathcal{P})=(b_1,\ldots,b_{n+r})\in\Q^{n+r}$, where $b_j$ is the coefficient of the column $\v_j$ of $V$ in (\ref{relazione primitiva 2}). Let $l$ be the least common denominator of $b_1,\ldots,b_{n+r}$. Then,
\begin{equation}\label{r_Z(P)}
    r_{\Z}(P)=r_{\Z}(\mathcal{P}):=l\,r(\mathcal{P})=(lb_1,\ldots,lb_{n+r})\in\mathcal{L}_r(Q)\subset\Z^{n+r}\,.
\end{equation}

\begin{definition}\label{def:pc}
  A collection $\mathcal{P}:=\left\{\rho_1,\ldots\rho_k\right\}\subset\Si(1)$ is called a \emph{primitive collection for $\Si$} if $\mathcal{P}$ is not contained in a single cone of $\Si$ but every proper subset is (compare \cite[Def.~2.6]{Batyrev91}, \cite[Def.~1.1]{Cox-vRenesse}, \cite[Def.~5.1.5]{CLS}).
\end{definition}

 If $\mathcal{P}$ is a primitive collection then it is determined by the positive entries of $r_{\Z}(\mathcal{P})$ in (\ref{r_Z(P)}). For the details see \cite[Lemma~1.8]{Cox-vRenesse}). This is no more the case if $\mathcal{P}$ is not a primitive collection.

  \noindent Recall that the transposed fan matrix $V^T$ and the transposed weight matrix $Q^T$ of a Gale dual $Q=\G(V)$ are representative matrices of the morphisms $div$ and $d^{\vee}$, respectively, in the following standard exact sequence
 \begin{equation}\label{div-sequence}
    \xymatrix{0 \ar[r] & M\ar[r]^-{div}_-{V^T}& \mathcal{W}_T(X)=\Z^{n+r}
\ar[r]^-{d} & \Cl(X) \ar[r]& 0}
\end{equation}
and in the dual sequence of it
\begin{equation}\label{HomZ-div-sequence}
                    \xymatrix{0 \ar[r] & A_1(X):=\Hom(\Cl(X),\Z) \ar[r]^-{d^{\vee}}_-{Q^T} & \Hom(\mathcal{W}_T(X),\Z)= \Z^{n+r}
 \ar[r]^-{div^{\vee}}_-{V} & N }
\end{equation}
Then (\ref{r_Z(P)}) gives that $r_{\Z}(P)\in\im(d^{\vee})$. Since $d^{\vee}$ is injective there exists a unique $\n_P\in A_1(X)$ such that
\begin{equation}\label{normalP}
    d^{\vee}(\n_P)=Q^T\cdot\n_P=r_{\Z}(P)
\end{equation}
which turns out to be the numerical equivalence class of the 1-cycle $r_{\Z}(P)$, whose intersection index with the torus--invariant Weil divisor $lD_j$, where $D_j$ denotes the closure of the torus orbit of the ray $\langle\v_j\rangle$, is given by the integer $lb_j$, for every $1\leq j\leq n+r$. In particular, given a primitive collection $\pc$ the associated primitive relation $r_{\Z}(P)$ is a numerically effective 1-cycle (nef) if and only if all the coefficients $lb_j$ in (\ref{r_Z(P)}) are non-negative: in this case $\pc$ will be called \emph{a numerically effective (nef) primitive collection}.

\begin{definition}\label{def:support} Given a collection $\pc=\{V_P\}$, for $P\subseteq\{1,\ldots,n+r\}$, its associated numerical class $\n_{P}\in N_1(X):=A_1(X)\otimes\R$, defined in (\ref{normalP}), determines a unique dual hyperplane
$$H_P\subseteq F^r_{\R}=\Cl(X)\otimes\R$$
which is called \emph{the support} of $\pc$.
\end{definition}

The following proposition gives some further characterization of a primitive collection.

\begin{proposition}\emph{\cite[Prop.\,2.2]{RT-Qfproj}}\label{prop:primitive} Let $V$ be a reduced $F$--matrix, $Q=\G(V)$ be a Gale dual $\REF$, positive $W$--matrix, $\Si\in\P\SF(V)$ and $P\subseteq\{1,\ldots,n+r\}$ such that $\pc=\{V_P\}$ is a primitive collection for $\Si$. Then $2\leq |\pc|=|P|\leq n+1$ and the following facts are equivalent:
\begin{enumerate}
  \item $\pc$ is a primitive collection for $\Si$, which is
  \begin{itemize}
    \item[($i.1$)] $\forall\,\s\in\Si(n)\quad\pc\nsubseteq\s(1)$,
    \item[($ii.1$)] $\forall\,\rho_i\in\pc\quad\exists\,\s\in\Si(n):\pc\backslash\{\rho_i\}\subseteq\s(1)$;
  \end{itemize}
  \item $V_P$ is a submatrix of $V$ such that
  \begin{itemize}
    \item[($i.2$)] $\forall\,J\subseteq\{1,\ldots,n+r\}:\langle V_J\rangle\in\Si(n)\quad\langle V_P\rangle\nsubseteq\langle V_J\rangle$,
    \item[($ii.2$)] $\forall\,i\in P\quad\exists\,J\subseteq\{1,\ldots,n+r\}:\langle V_J\rangle\in\Si(n)\ ,\ \langle V_{P\backslash\{i\}}\rangle\subseteq\langle V_J\rangle$;
  \end{itemize}
  \item $Q^P$ is a submatrix of $Q=\G(V)$ such that
  \begin{itemize}
    \item[($i.3$)] $\forall\,J\subseteq\{1,\ldots,n+r\}:\langle Q^J\rangle\in\mathcal{B}(\g_{\Si})\quad\langle Q^J\rangle\nsubseteq\langle Q^P\rangle$,
    \item[($ii.3$)] $\forall\,i\in P\quad\exists\,J\subseteq\{1,\ldots,n+r\}:\langle Q^J\rangle\in\mathcal{B}(\g_{\Si})\ ,\ \langle Q^J\rangle\subseteq\langle Q^{P\backslash\{i\}}\rangle$;
  \end{itemize}
  \item $Q^P$ is a submatrix of $Q=\G(V)$ such that
  \begin{itemize}
    \item[($i.4$)] $\g_{\Si}\nsubseteq\langle Q^P\rangle$,
    \item[($ii.4$)] $\forall\,i\in P\quad\g_{\Si}\subseteq\langle Q^{P\backslash\{i\}}\rangle$.
   \end{itemize}
\end{enumerate}
Moreover the previous conditions ($ii.1$), ($ii.2$), ($ii.3$), ($ii.4$) are equivalent to the following one:
\begin{itemize}
  \item[($ii$)] $\forall\,i\in P\quad\exists\, \mathcal{C}_{i,P}\in\mathcal{B}(\g_{\Si}):\ \mathcal{C}_{i,P}(1)\cap\pc^*=\{\langle\q_{i}\rangle\}$\,.
\end{itemize}
\end{proposition}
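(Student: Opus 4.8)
The plan is to set up a short dictionary translating the relevant membership and inclusion relations on the fan side of $\Z$--linear Gale duality into relations on the Gale--dual side, and then to treat each of the two groups of conditions — the ``$(i)$'' ones and the ``$(ii)$'' ones — as a short cycle of implications, closed by a single genuinely geometric step. The cardinality bound $2\leq|P|\leq n+1$ is quick: a primitive collection cannot be a single ray (a ray is already a cone of $\Si$), and since $\Si$ is simplicial and complete every proper subset of $\pc$ lies in an $n$--dimensional cone, whose rays are linearly independent, so $|P|-1\leq n$. Writing $A^{c}=\{1,\dots,n+r\}\setminus A$, so that $\langle Q^{A}\rangle=\langle Q_{A^{c}}\rangle$, the dictionary I would record is: \textbf{(a)} for any index set $A$ and any $J$ with $\langle V_{J}\rangle\in\Si(n)$, $\langle V_{A}\rangle\subseteq\langle V_{J}\rangle\iff A\subseteq J$ (each $\v_{a}$, $a\in A$, spans a ray of $\Si$, so if $\v_{a}\in\langle V_{J}\rangle$ it is one of the rays of that simplicial cone, and distinct columns of a reduced $F$--matrix span distinct rays), together with the fact that every cone of $\Si$ is a face of some $n$--cone; \textbf{(b)} the bunch correspondence (\ref{bunch}), $\langle V_{J}\rangle\in\Si(n)\iff\langle Q^{J}\rangle\in\mathcal{B}(\g_{\Si})$; \textbf{(c)} monotonicity, $A\subseteq J\Rightarrow\langle Q^{J}\rangle\subseteq\langle Q^{A}\rangle$ and $K\subseteq A^{c}\Rightarrow\langle Q_{K}\rangle\subseteq\langle Q^{A}\rangle$; \textbf{(d)} the chamber property of the secondary fan: since $\g_{\Si}$ is a maximal cone of $\Ga$ (Theorem~\ref{thm:GKZ}), for every simplicial subcone $\langle Q_{K}\rangle$ ($|K|=r$, $\det Q_{K}\neq0$) one has either $\g_{\Si}\subseteq\langle Q_{K}\rangle$, hence $\langle Q_{K}\rangle\in\mathcal{B}(\g_{\Si})$, or $\Relint(\g_{\Si})\cap\langle Q_{K}\rangle=\emptyset$; consequently, if $\g_{\Si}\subseteq\langle Q^{A}\rangle$ with $\langle Q^{A}\rangle$ full dimensional, then triangulating $\langle Q^{A}\rangle$ into such subcones $\langle Q_{K}\rangle$, $K\subseteq A^{c}$, a generic point of $\g_{\Si}$ lies in the interior of one of them, forcing $\g_{\Si}\subseteq\langle Q_{K}\rangle$, i.e.\ by (b) a maximal cone $\langle V_{K^{c}}\rangle\in\Si(n)$ with $A\subseteq K^{c}$.

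With the dictionary in hand, I would prove the ``$(i)$'' block as the cycle $(i.1)\Leftrightarrow(i.2)$, $(i.2)\Rightarrow(i.4)$, $(i.4)\Rightarrow(i.3)$, $(i.3)\Rightarrow(i.2)$. Here $(i.1)\Leftrightarrow(i.2)$ is a direct restatement through (a) (read $\s=\langle V_{J}\rangle$ and use that every cone is a face of an $n$--cone); $(i.4)\Rightarrow(i.3)$ is immediate since every $\b\in\mathcal{B}(\g_{\Si})$ contains $\g_{\Si}$, so $\b\subseteq\langle Q^{P}\rangle$ would force $\g_{\Si}\subseteq\langle Q^{P}\rangle$; and $(i.3)\Rightarrow(i.2)$ is immediate contrapositively, since a maximal cone $\langle V_{J}\rangle\in\Si(n)$ with $\langle V_{P}\rangle\subseteq\langle V_{J}\rangle$ yields, by (a), (b) and (c), a bunch cone $\langle Q^{J}\rangle\subseteq\langle Q^{P}\rangle$. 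The single non--formal step is $(i.2)\Rightarrow(i.4)$, for which I would argue contrapositively: from $\g_{\Si}\subseteq\langle Q^{P}\rangle$ the chamber property (d) produces $\langle Q^{J}\rangle\in\mathcal{B}(\g_{\Si})$ with $P\subseteq J$, hence a maximal cone $\langle V_{J}\rangle\in\Si(n)$ with $\langle V_{P}\rangle\subseteq\langle V_{J}\rangle$, contradicting $(i.2)$ — the case $|P|=n+1$ being degenerate, as then $\langle Q^{P}\rangle$ has too few generators to be full dimensional and $(i.2)$, $(i.4)$ both hold automatically.

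The ``$(ii)$'' block has exactly the same shape, applied for each $i\in P$ to the index set $P\setminus\{i\}$ in place of $P$: $(ii.1)\Leftrightarrow(ii.2)$ through (a); $(ii.2)\Rightarrow(ii.4)$ through (b) and (c); $(ii.3)\Rightarrow(ii.4)$ trivially, since the witnessing cone contains $\g_{\Si}$; and the one geometric step $(ii.4)\Rightarrow(ii.2)\,\text{and}\,(ii.3)$, which runs (d) on $\langle Q^{P\setminus\{i\}}\rangle$ to produce simultaneously a bunch cone $\langle Q^{J}\rangle\subseteq\langle Q^{P\setminus\{i\}}\rangle$ and, through (b) and (a), a maximal cone $\langle V_{J}\rangle\in\Si(n)$ containing $\langle V_{P\setminus\{i\}}\rangle$. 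To finish, I would check that $(ii.1)$--$(ii.4)$ are equivalent to $(ii)$: that $(ii)\Rightarrow(ii.2)$ is trivial, while for the converse the standing primitivity of $\pc$ enters — given $(i.2)$, the maximal cone $\langle V_{J}\rangle$ furnished by $(ii.2)$ with $P\setminus\{i\}\subseteq J$ cannot contain all of $P$, so $i\notin J$ and $J^{c}\cap P=\{i\}$; then $\mathcal{C}_{i,P}:=\langle Q^{J}\rangle\in\mathcal{B}(\g_{\Si})$ has rays exactly the $\langle\q_{k}\rangle$, $k\in J^{c}$, whence $\mathcal{C}_{i,P}(1)\cap\pc^{*}=\{\langle\q_{i}\rangle\}$.

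The step I expect to be the real obstacle is the chamber property (d): pinning down, and then using, the fact that a chamber $\g_{\Si}$ of the secondary fan lies entirely inside a simplicial subcone $\langle Q_{K}\rangle$ as soon as their intersection is full dimensional. This is precisely the point at which the combinatorics of the GKZ decomposition — Theorem~\ref{thm:GKZ} and the description (\ref{camera})--(\ref{bunch}) following Theorem~\ref{thm:Gale sui coni} — does the essential work, turning a purely convex--geometric inclusion of cones in $F^{r}_{\R}$ into the existence of an honest maximal cone of $\Si$. Everything else is bookkeeping through $\Z$--linear Gale duality; the only mild care needed is that distinct columns of $Q$ may span the same ray (as happens already for $\bpt$), which causes no trouble above because within any bunch cone the columns involved are linearly independent.
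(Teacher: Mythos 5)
Your argument is correct. Note that this paper does not actually prove Proposition~\ref{prop:primitive} — it is imported verbatim as Prop.~3.2 of \cite{RT-Qfproj} — and your reconstruction follows the same route that reference takes: translate everything through the bunch correspondence (\ref{bunch}) and close the cycle with the minimality/chamber property of the GKZ decomposition (your step (d)), which is exactly the point where the fact that $\g_{\Si}$ is a \emph{minimal} $r$--dimensional intersection of simplicial subcones turns a full--dimensional overlap with $\langle Q_K\rangle$ into the inclusion $\g_{\Si}\subseteq\langle Q_K\rangle$ and hence into a maximal cone of $\Si$. Your closing caveat about coincident rays is also handled correctly, since each bunch cone $\langle Q^J\rangle$ has $\det(Q^J)\neq 0$ by (\ref{Gale su det}), so its $r$ generating columns are linearly independent and its rays are unambiguous.
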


\subsection{The general result: when a smooth chamber is bordering?}
Let us first of all introduce the following

\begin{definition}[\emph{Bordering} collections and chambers]\cite[Def.\,3.5]{RT-Qfproj}\label{def:bordering}\hfill
\begin{enumerate}
  \item Let $V$ be a reduced $F$--matrix and $Q=\G(V)$ a $\REF$, positive $W$--matrix. A collection $\pc=\{V_P\}$, for some $P\in\mathfrak{P}$, is called \emph{bordering} if its support $H_P$ cuts out a facet of the Gale dual cone $\gkz=\langle Q\rangle$.
  \item A chamber $\g\in\Ga(Q)$ is called \emph{bordering} if $\dim(\g\cap\partial\gkz)\geq 1$. Notice that $\g\cap\partial\gkz$ is always composed by faces of $\g$: if it contains a facet of $\g$ then $\g$ is called \emph{maximally bordering} (\emph{maxbord} for short).  A hyperplane $H$ cutting a facet of $\gkz$ and such that $\dim(\g\cap H)\geq 1$ is called a \emph{bordering hyperplane of $\g$} and the bordering chamber $\g$ is also called \emph{bordering with respect to $H$}. A normal vector $\n$ to a bordering hyperplane $H$ is called \emph{inward} if $\n\cdot x\geq 0$ for every $x\in\g$.
  \item A bordering chamber $\g\in\Ga(Q)$, w.r.t. the hyperplane $H$,
is called \emph{internal bordering} (\emph{intbord} for short) w.r.t. $H$, if either $\g$ is maxbord w.r.t $H$ or there exists an hyperplane $H'$, cutting a facet of $\g$ and such that
\begin{itemize}
  \item[($i$)] $\g\cap H\subseteq \g\cap H'$
  \item[($ii$)] $\exists\,\q_1,\q_2\in H\cap\gkz(1) : (\n'\cdot\q_1)(\n'\cdot\q_2)< 0$
 \end{itemize}
where $\n'$ is the inward primitive normal vector of $H'$.
\end{enumerate}
Notice that a primitive collection $\pc=\{V_P\}$ is bordering if and only $\pc$ is nef, i.e. $r_{\Z}(\pc)$ is a numerically effective 1-cycle.
\end{definition}

\begin{theorem}\label{thm:} Let $X(\Si)$ be a smooth complete toric variety of rank $r\leq 3$ (hence projective, by \cite{Kleinschmidt-Sturmfels})  and $\g_{\Si}\subseteq\Mov(Q)$ be the associated chamber of the secondary fan. Then $\Si$ admits a nef primitive collection $\pc$ such that $\g_{\Si}$ is a bordering chamber with respect to the support hyperplane $H_P$ of $\pc$. In particular $X$ admits a non-trivial nef and non-big divisor.
\end{theorem}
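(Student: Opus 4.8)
The plan is to argue entirely on the Gale dual side, analyzing the chamber $\g_{\Si}\subseteq\Mov(Q)\subseteq\CQ\subseteq F^r_+$ for small $r$. For $r=1$ the statement is trivial since $X\cong\P^n$, so assume $r\in\{2,3\}$. By \cite[Prop.~3.2]{Batyrev91} we know $\Si$ admits at least one nef primitive collection, so the real content is to find one whose support hyperplane $H_P$ is \emph{bordering} for $\g_{\Si}$, i.e. cuts a facet of $\CQ$ and meets $\g_{\Si}$ in dimension $\geq 1$. The strategy is: (i) translate ``nef primitive collection'' and ``$H_P$ cuts a facet of $\CQ$'' into the combinatorics of the reduced positive REF weight matrix $Q=\G(V)$ via Definition~\ref{def:bordering} and Proposition~\ref{prop:primitive}.(ii), (iii) exploit that $Q$ is positive and in REF — so $\CQ\subseteq F^r_+$ and the last $r$ columns of $Q$ (suitably) are closely related to the coordinate facets of $F^r_+$ — to control the facet structure of $\CQ$, and then (iv) show directly, by a case analysis on $r=2$ and $r=3$, that one of the primitive collections supplied by \cite[Prop.~3.2]{Batyrev91} must have its support on a facet of $\CQ$ that also bounds $\g_{\Si}$.

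The key structural input I would isolate first is a lemma (the paper flags this as Lemma~\ref{lm:bordante}, ``whose holdness goes beyond $r\leq 3$'') roughly asserting: if $\pc$ is a nef primitive collection then its support $H_P$ is automatically a supporting hyperplane of $\CQ=\overline{\Eff}(X)$, and $\g_{\Si}$ lies in the half-space $\{\n_P\cdot x\geq 0\}$; hence $\g_{\Si}\cap H_P$ is a face of $\g_{\Si}$ contained in $\partial\CQ$. Granting this, ``bordering with respect to $H_P$'' reduces to the two separate assertions: ($a$) $H_P$ actually cuts out a \emph{facet} of $\CQ$ (not a lower-dimensional face), and ($b$) $\dim(\g_{\Si}\cap H_P)\geq 1$, i.e. $H_P$ is not disjoint from $\g_{\Si}$ away from the origin. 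For $r=2$, both are nearly automatic: $\CQ$ is a $2$-dimensional cone, its facets are its two edges, a nef primitive collection forces $\n_P$ to vanish on one edge of $\CQ$, and $\g_{\Si}\subseteq\Mov(Q)$ a full-dimensional subcone must then share that edge — one gets a fibration and $B\cong\P^m$ by Kleinschmidt. This is \S~\ref{sez:r=2}.

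For $r=3$ the argument in \S~\ref{sez:r=3} is the heart of the matter. Here $\CQ\subseteq F^3_+$ is a $3$-dimensional cone and a nef primitive collection $\pc$ has $\n_P$ lying in (the boundary of) the dual cone, with $\g_{\Si}$ on the nonnegative side of $H_P$; I would enumerate the possible positions of $\g_{\Si}$ relative to the finitely many facets of $\CQ$, using \eqref{camera}, i.e. $\g_{\Si}=\bigcap_{\b\in\B(\g_{\Si})}\b$, together with the characterization ($ii$) of Proposition~\ref{prop:primitive} that for each $i\in P$ there is a bunch cone $\mathcal{C}_{i,P}$ meeting $\pc^*$ only in $\langle\q_i\rangle$. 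The case one must rule out is that \emph{every} nef primitive collection has $H_P$ meeting $\g_{\Si}$ only at $\{0\}$ (equivalently $\g_{\Si}\subset\{\n_P\cdot x>0\}\cup\{0\}$) — i.e. $\g_{\Si}$ internal to $\CQ$; I expect to derive a contradiction by a Gale-duality/dimension count, showing that with $r=3$ and $Q$ positive REF there are too few columns of $Q$ for all nef primitive relations to ``miss'' $\g_{\Si}$. \textbf{The main obstacle} is exactly this last step: controlling, for $r=3$, the interplay between \emph{which} primitive collections are nef and \emph{which} facets of $\CQ$ the movable chamber $\g_{\Si}$ actually touches; this is where one genuinely uses the bound $r\leq 3$ (and where, for $r\geq 5$, the Fujino–Sato examples show the statement fails), so the case analysis must be organized carefully — e.g. splitting on $|\CQ(1)|$ (the number of rays of $\CQ$, equivalently $n+3$ minus the number of ``redundant'' columns) and on whether $\g_{\Si}$ is maxbord. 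Once a bordering nef primitive collection is produced, the final sentence — $X$ has a nontrivial nef, non-big divisor — is immediate: a nonzero class in $\g_{\Si}\cap H_P\subseteq\Nef(X)\cap\partial\overline{\Eff}(X)$ is nef by Proposition~\ref{prop:nef} and non-big because it lies on the boundary of $\overline{\Eff}(X)$.
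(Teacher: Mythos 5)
You correctly frame the problem (existence from Batyrev, bordering of the support is automatic for a nef collection, what remains is $\dim(\g_{\Si}\cap H_P)\geq 1$), and your final sentence about extracting a nef non-big divisor is fine. But there are two substantive problems with the proposal.

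First, you have misidentified what Lemma~\ref{lm:bordante} says. Your ``lemma'' — that $H_P$ supports $\CQ$ and $\g_{\Si}\subseteq\{\n_P\cdot x\geq 0\}$ — is trivially true, since $\g_{\Si}\subseteq\CQ$ by construction. The actual Lemma~\ref{lm:bordante} is a nontrivial combinatorial constraint on the bunch $\B(\g_{\Si})$: \emph{every} cone $\b\in\B(\g_{\Si})$ has at least one generating ray among the columns $\{Q^P\}$ lying inside $H_P$, and its proof (postponed to \S\ref{sez:r>3}) goes through the characterization ($ii.4$) of Proposition~\ref{prop:primitive} and the hyperplanes $H_j$ cutting facets of the cones $\langle Q^{P\setminus\{j\}}\rangle$. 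This constraint is what feeds into the subsequent Lemma~\ref{lm:intersezione} and Corollary~\ref{cor:intersezione}, which reduce $\g_{\Si}$ to an intersection $\bigcap_{\b\in\B'(\g_{\Si})}\b$ of cones each having exactly two generators on $H_P$ and one in $\pc^*$. Without this structural preparation, the final step cannot get off the ground.

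Second, and more seriously, the step you yourself flag as ``the main obstacle'' is left unresolved, and the mechanism you propose to resolve it is the wrong one. You write that you ``expect to derive a contradiction by a Gale-duality/dimension count, showing that \ldots there are too few columns of $Q$.'' But the number of columns of $Q$ is $n+3$, which can be arbitrarily large, so no counting argument of this type can work; indeed nothing in your sketch uses the \emph{smoothness} hypothesis for $r=3$. In the paper the bad configuration (Fig.~\ref{fig3}: $\g_{\Si}$ stuck between two cones of $\B'(\g_{\Si})$ and hence not touching $H_P$) is excluded precisely by the determinant condition from regularity via (\ref{Gale su det}): writing $\q_3=a\q_1+b\q_2$ and computing $\det(\pp_2,\q_1,\q_3)=\n\cdot(a\q_1+b\q_2)=b=1$, one gets $a<0$, and symmetrically $a'<0$, which forces $a\q_2+a'\q_4=0$ against the positivity of the $W$--matrix. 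This smoothness-driven sign contradiction is the actual engine of the $r=3$ case; a case analysis on ``positions of $\g_{\Si}$ relative to facets of $\CQ$'' without it has no leverage (and for $\Q$--factorial singular varieties or for $r\geq 5$ the statement is false, so the smoothness and rank bounds must appear concretely somewhere).

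So the proposal is an outline, not a proof: the case $r=3$ — which is the entire content of the theorem — is replaced by an acknowledged gap and a heuristic that points in the wrong direction.
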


\section{The case of Picard number $r=2$}\label{sez:r=2}
Let us start by proving Theorem \ref{thm:} for $r=2$.
\begin{figure}
\begin{center}
\includegraphics[width=8truecm]{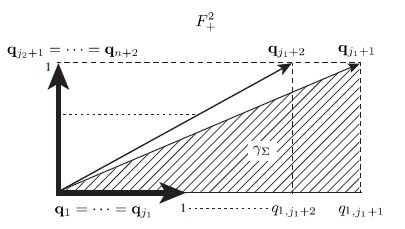}
\caption{\label{fig1}The Gale dual cone $\mathcal{Q}=F^2_+$.}
\end{center}
\end{figure}

Let $V=(\v_1,\ldots,\v_{n+2})\in\Mat(n,n+2;\Z)$ be a reduced fan matrix of $X(\Si)$ and $Q=\G(V)=(\q_1,\ldots,\q_{n+2})\in\Mat(2,n+2;\Z)$ be an associated positive $\REF$ weight matrix. Recalling the argument proving \cite[Thm.\,2.1]{RT-Qfproj}, and revising the Kleinschmidt argument for classifying smooth and complete toric varieties of rank $r=2$  \cite{Kleinschmidt}, up to left multiplication by a unimodular matrix and a possible rearrangement of columns, the $W$--matrix $Q$ and the chamber $\g_{\Si}$ can be assumed to be in the following form
\begin{equation}\label{Q1}
    Q=\overbrace{\left(
        \begin{array}{ccc}
          1 & \cdots & 1\\
          0 & \cdots & 0 \\
          \end{array}\right.}^{j_1\geq 2}
          \left.
        \begin{array}{cccccc}
           q_{1,j_1+1}&\cdots & q_{1,j_2} & \mathbf{0}_{n+2-j_2}  \\
           1 & \cdots & 1 & \mathbf{1}_{n+2-j_2} \\
        \end{array}
      \right)\quad\text{with}\quad\g_{\Si}=\left\langle\q_{j_1},\q_{j_1+1}\right\rangle
\end{equation}
as described in Fig.\,\ref{fig1}.
Then $\pc:=\{\v_j\,|\,j\geq j_1+1\}$ is a primitive nef collection of $\Si$, whose primitive relation is given by the bottom row of $Q$. Moreover $\g_{\Si}$ is clearly maxbord w.r.t. the support hyperplane $H_P=\{x_2=0\}\subset\Cl(X)\otimes\R\cong \R^2$.

\section{The case of Picard number $r=3$}\label{sez:r=3}

The proof of Theorem~\ref{thm:} for $r=3$ turns out to be significantly more intricate than the previous case $r=2$.

Let us start by considering a nef primitive collection $\pc=\{V_P\}$ for a \emph{projective} fan $\Si\in\P\SF(V)$, whose support is a plane $H_P\subset F^3_{\R}$ cutting out a facet of $\gkz\subseteq F^3_+$: such a primitive collection  certainly exists when $\Si$ is regular, by \cite[Prop.\,3.2]{Batyrev91}.

\begin{figure}
\begin{center}
\includegraphics[width=10truecm]{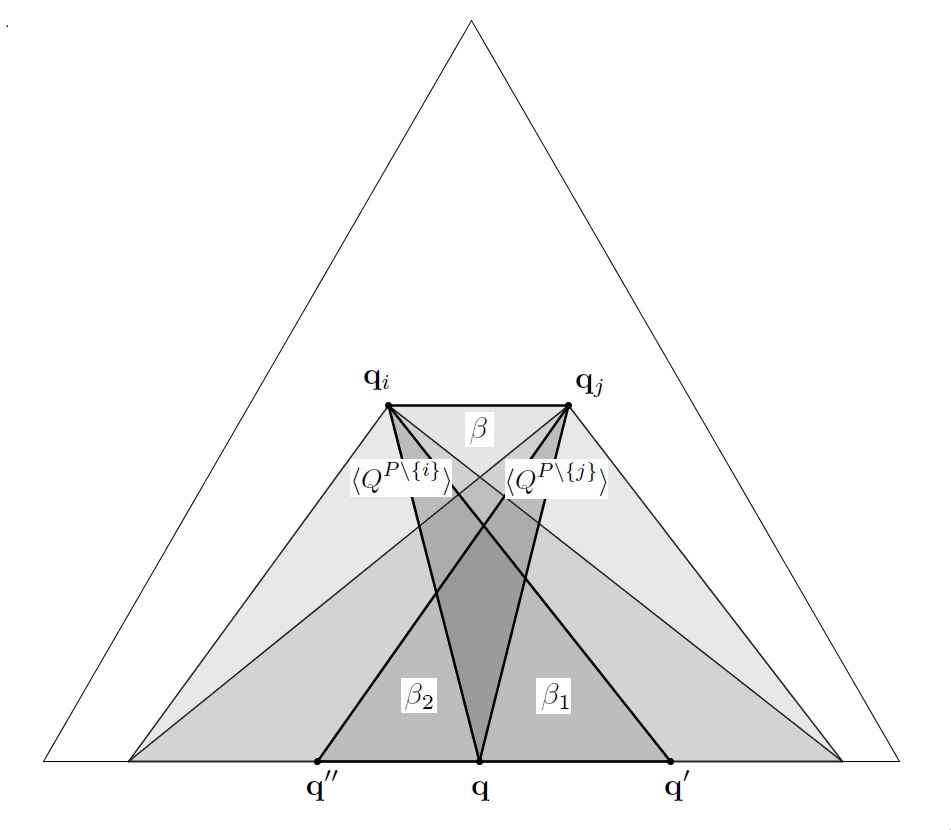}
\caption{\label{fig2}The section of the positive orthant $F^3_+$, cut out by the plane $\sum_{i=1}^3x_i=1$, describing the situation of Lemma \ref{lm:intersezione}.}
\end{center}
\end{figure}

\begin{lemma}\label{lm:bordante}
  $\forall\,\b\in\B(\g_{\Si})\quad \b(1)\cap\{Q^P\}\neq\emptyset$
\end{lemma}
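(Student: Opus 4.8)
The claim is that every cone $\b$ in the bunch $\B(\g_\Si)$ of the smooth chamber $\g_\Si$ contains at least one ray generated by a column of $Q^P$, i.e.\ one of the Gale-dual rays $\langle\q_j\rangle$ with $j\notin P$. Recall that $\pc=\{V_P\}$ is a nef primitive collection whose support hyperplane $H_P$ cuts out a facet of $\gkz=\langle Q\rangle$; by Definition~\ref{def:bordering} and the equivalence at the end of it, $\pc$ being nef is exactly the bordering condition for $H_P$. The plan is to argue by contradiction: suppose some $\b_0=\langle Q^{J_0}\rangle\in\B(\g_\Si)$ has $\b_0(1)\cap\{Q^P\}=\emptyset$, meaning $J_0\subseteq P$ (after matching index conventions: $\langle Q^{J_0}\rangle$ is generated by the columns \emph{not} indexed by $J_0$, so those generators avoiding $\{Q^P\}$ forces the complement of $J_0$ to lie in $\{1,\dots,n+r\}\setminus P$, equivalently $P\subseteq J_0$).

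**The core argument.** First I would record the geometric meaning of the bordering hypothesis: $H_P$ supports a facet $\gkz\cap H_P$ of $\gkz$, so all columns $\q_1,\dots,\q_{n+r}$ lie in one of the closed halfspaces determined by $H_P$, say $\{\n_P\cdot x\ge 0\}$ (with $\n_P$ the support class of Definition~\ref{def:support}), and the columns $\q_j$ with $j\in P$ are precisely those \emph{not} lying on $H_P$ — indeed $\n_P$ is the numerical class whose intersection with $D_j$ is $lb_j$, which is positive exactly for $j\in P$ (the positive support of the primitive relation) and zero otherwise. Hence $\{Q^P\}$ (the columns indexed outside $P$) all lie \emph{on} the facet hyperplane $H_P$, while $\{Q_P\}$ lies strictly on the positive side. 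Now take $\b_0=\langle Q^{J_0}\rangle\in\B(\g_\Si)$ with $P\subseteq J_0$: every generator of $\b_0$ is a column $\q_j$ with $j\notin J_0$, hence $j\notin P$, hence $\q_j\in H_P$. Therefore $\b_0\subseteq H_P$, so $\dim\b_0\le r-1$. But $\b_0$ is a cone in the bunch of a chamber, hence $r$-dimensional (it is $\langle Q_J\rangle$ for $|J|=r$ with $\det(Q_J)\ne0$ in the notation of $\B(\g)$; one must reconcile the two index conventions, but in either convention the cones of a bunch are full-dimensional and their intersection is the $r$-dimensional chamber $\g_\Si$ by~(\ref{camera})). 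This contradiction proves the lemma.

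**The main obstacle.** The genuine subtlety here is bookkeeping with the two dual index conventions — $A_I$ versus $A^I$ in the notation list, and correspondingly $\langle Q^J\rangle$ in $\B(\g_\Si)$ versus $\langle Q_J\rangle$ in the definition of $\B(\g)$ — and making sure that "$\b(1)\cap\{Q^P\}=\emptyset$" is correctly translated into a containment of index sets before the halfspace argument is applied. Once that is pinned down, the only real input is the identification of $H_P$ as a supporting hyperplane of the facet $\gkz\cap H_P$ together with the identification of $\{Q^P\}$ as exactly the columns sitting on that facet; this is immediate from the description of $\n_P$ via~(\ref{normalP}) and~(\ref{r_Z(P)}) and the nef (= bordering) hypothesis. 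The dimension count then closes the argument with no computation. I would therefore present the proof as: (1) translate the emptiness hypothesis into $P\subseteq J_0$; (2) invoke the bordering hypothesis to get $\{q_j : j\notin P\}\subseteq H_P$; (3) conclude $\b_0\subseteq H_P$, so $\dim\b_0<r$; (4) contradict full-dimensionality of cones in $\B(\g_\Si)$.
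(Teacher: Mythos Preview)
Your argument contains a genuine error in the index bookkeeping that reverses the conclusion. You correctly observe that for a nef primitive collection $\pc=\{V_P\}$ one has $\n_P\cdot\q_j>0$ for $j\in P$ and $\n_P\cdot\q_j=0$ for $j\notin P$, so the columns of $\{Q^P\}$ lie \emph{on} $H_P$ while those of $\pc^*=\{Q_P\}$ lie strictly off it. But then the translation of the hypothesis $\b_0(1)\cap\{Q^P\}=\emptyset$ goes the other way: the generators of $\b_0$ are the $\q_j$ with $j\notin J_0$, and saying none of them belongs to $\{Q^P\}=\{\q_j:j\notin P\}$ means $J_0^c\subseteq P$, not $J_0^c\subseteq P^c$. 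Consequently every generator of $\b_0$ has index in $P$ and sits \emph{strictly on the positive side} of $H_P$, so $\b_0\not\subseteq H_P$ and the dimension count gives no contradiction whatsoever. (Your dimension argument would instead prove the trivial companion statement $\b(1)\cap\pc^*\neq\emptyset$, i.e.\ not all generators can lie on $H_P$.)

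The actual obstruction is subtler and genuinely uses that $\pc$ is primitive, via condition ($ii.4$) of Proposition~\ref{prop:primitive}: for each $j\in P$ the chamber $\g_\Si$ is contained in $\langle Q^{P\setminus\{j\}}\rangle$. The paper exploits this by picking an adjacent facet $H$ of $\gkz$, taking $r-2$ independent columns in $H\cap H_P$, and for each $j\in P$ forming the hyperplane $H_j$ through those columns and $\q_j$; one of these $H_{j_0}$ has all of $\pc^*$ on its nonpositive side, while $\g_\Si\subseteq\langle Q^{P\setminus\{j_0\}}\rangle$ forces $\g_\Si$ to its nonnegative side. A hypothetical $\b_0$ with generators only in $\pc^*$ would then be entirely on the nonpositive side of $H_{j_0}$, forcing $\g_\Si\subseteq\b_0$ into $H_{j_0}$ and contradicting $\dim\g_\Si=r$. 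Your proposal misses this use of primitivity, which is the essential input.
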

Since this result holds for every $r\geq 1$, we postpone its proof to the section \S\ref{sez:r>3}

Let us now set $\B'(\g_{\Si}):=\{\b\in \B(\g_{\Si})\,:\,|\b(1)\cap\{Q^P\}|=2\}$. Then we get the following

\begin{lemma}\label{lm:intersezione}
 $\forall\,\b\in \B(\g_{\Si})\setminus\B'(\g_{\Si})\quad\exists\b_1,\b_2\in\B'(\g_{\Si})\,:\,\b_1\cap\b_2\subseteq\b$\,.
\end{lemma}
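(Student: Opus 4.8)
The setup is that $\pc=\{V_P\}$ is a nef primitive collection for a projective fan $\Si\in\P\SF(V)$ whose support $H_P$ cuts out a facet of $\gkz\subseteq F^3_+$; since $r=3$, the chamber $\g_\Si$ is a $3$-dimensional cone, and every $\b\in\B(\g_\Si)$ is a simplicial $3$-dimensional cone $\langle Q^J\rangle$ containing $\g_\Si$. By Lemma~\ref{lm:bordante} each such $\b$ meets $\{Q^P\}$ in at least one ray, and by Proposition~\ref{prop:primitive}, condition $(i.3)$, no $\b$ can have all three of its rays among $\{Q^P\}$; hence $|\b(1)\cap\{Q^P\}|\in\{1,2\}$ for every $\b$, and $\B'(\g_\Si)$ is precisely the set of $\b$ meeting $\{Q^P\}$ in exactly two rays. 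The goal is: for any $\b$ meeting $\{Q^P\}$ in exactly one ray, produce $\b_1,\b_2\in\B'(\g_\Si)$ with $\b_1\cap\b_2\subseteq\b$. The natural picture is the planar section of $F^3_+$ by $\sum x_i=1$ (Figure~\ref{fig2}): each $\b$ becomes a triangle, $\g_\Si$ a polygon sitting inside the common intersection of these triangles, the line $\ell_P:=H_P\cap\{\sum x_i=1\}$ bounds the half-plane-slice of $\gkz$ containing $\g_\Si$, and the ``$Q^P$-rays'' are the vertices lying strictly on the far side of $\ell_P$ from the remaining rays — actually, since $\pc$ is primitive and nef, the $Q^P$-points are the vertices on one (closed) side and everything else is on the other.

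\textbf{Key steps.} First I would fix $\b=\langle Q^J\rangle\in\B(\g_\Si)\setminus\B'(\g_\Si)$, so $\b$ has a unique ray $\langle\q_i\rangle$ with $i\in P$ and its other two rays $\langle\q_a\rangle,\langle\q_b\rangle$ have $a,b\notin P$. In the planar section, $\b$ is a triangle with one vertex $\q_i$ on the $P$-side of $\ell_P$ and the edge $e=[\q_a,\q_b]$ on the opposite side, so $e\cap\ell_P=\emptyset$ but $\ell_P$ separates $\q_i$ from $e$ inside the triangle; in particular $\ell_P$ meets the two edges $[\q_i,\q_a]$ and $[\q_i,\q_b]$. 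Now $\g_\Si\subseteq\b$ and $\g_\Si$ is bordering w.r.t. $H_P$, so $\g_\Si\cap\ell_P$ is a nonempty segment (or point) lying in the slice of $\b$ cut off by $\ell_P$ on the $P$-side, i.e. in the triangle $\q_i,\ell_P\cap[\q_i,\q_a],\ell_P\cap[\q_i,\q_b]$. The second step is to walk around the fan structure on the $P$-side: the facet $H_P\cap\gkz$ of $\gkz$ is itself subdivided by the chambers, and the chambers $\b'\in\B(\g_\Si)$ restrict on $\ell_P$ to a collection of sub-segments whose union contains $\g_\Si\cap\ell_P$; among these, the ones containing the two endpoints of $\g_\Si\cap\ell_P$ must be $\b_1,\b_2\in\B'(\g_\Si)$ — because a chamber cutting $\ell_P$ in a segment of positive length must have two of its vertices (or an edge) straddling or lying along the facet, forcing it to contain two $P$-rays (by the combinatorial description: a triangle whose intersection with $\ell_P$ is a nondegenerate segment must have exactly one or two vertices strictly on each side, and having the $\g_\Si$-endpoint in its interior rules out the $1$-$P$-ray configuration whose $\ell_P$-slice is the single ``far'' edge $e$). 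The third step is to verify $\b_1\cap\b_2\subseteq\b$: both $\b_1$ and $\b_2$ contain $\g_\Si$, hence contain $\g_\Si\cap\ell_P$; their intersection is a common face, and since $\g_\Si\subseteq\b_1\cap\b_2\cap\b$ and all three are cones over triangles of the section containing the fixed polygon $\g_\Si$, a dimension count together with $\g_\Si\cap H_P\subseteq\b_1\cap\b_2$ forces $\b_1\cap\b_2$ to be the face of the common refinement supported by $\g_\Si$'s position, which lies in $\b$. More carefully: $\b_1\cap\b_2$ is at most $2$-dimensional (two distinct chambers of $\B(\g_\Si)$ share a proper face), it contains the segment $\g_\Si\cap\ell_P$, and on $\ell_P$ the slices $\b_i\cap\ell_P$ overlap exactly in the endpoint — so $\b_1\cap\b_2$ projects into that endpoint-neighbourhood which is contained in $\b\cap\ell_P$, and combined with both containing $\g_\Si$ this pins $\b_1\cap\b_2$ inside $\b$.

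\textbf{Main obstacle.} The hard part is the rigorous bookkeeping of the planar section: translating ``$\pc$ nef + $H_P$ cuts a facet + $\g_\Si$ bordering'' into the precise statement that the chambers meeting $\ell_P$ along $\g_\Si\cap\ell_P$ at its endpoints have exactly two $P$-rays, and that no chamber with a single $P$-ray can reach the interior of $\g_\Si\cap\ell_P$. This is where Proposition~\ref{prop:primitive}(4) and the explicit form of $Q$ (as in \eqref{Q1}, suitably generalized to $r=3$, i.e. after normalizing so that the facet $H_P\cap\gkz$ sits in a coordinate plane and the $P$-columns are the ones with a zero in the corresponding coordinate) should be used to get honest inequalities rather than a picture; one likely needs to argue that the subdivision of the facet $H_P\cap\gkz$ induced by $\{\b\cap H_P:\b\in\B(\g_\Si)\}$ is itself the bunch/chamber structure of the boundary, so that walking to the endpoint of $\g_\Si\cap H_P$ lands in a well-defined chamber which, by $(ii.3)$ applied to the boundary, contains a second $P$-ray. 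I would also be careful about the degenerate case where $\g_\Si\cap H_P$ is a single ray (so $\g_\Si$ touches the facet in dimension exactly $1$): then $\b_1$ and $\b_2$ may coincide, or the ``endpoints'' collapse, and one should check the statement still reads correctly (it does: take $\b_1=\b_2$). Everything else — that $\b_1\cap\b_2$ is a common face, dimension $\le 2$, contains $\g_\Si$ — is routine from the fan axioms for $\Ga(Q)$ and the fact that $\B(\g_\Si)$ consists of cones all containing $\g_\Si$.
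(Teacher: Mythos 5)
Your proof has a critical sign/index error at the outset that propagates into a wrong geometric picture and a fundamentally different (and invalid) strategy. You write that a cone $\b\in\B(\g_\Si)\setminus\B'(\g_\Si)$ ``has a unique ray $\langle\q_i\rangle$ with $i\in P$ and its other two rays $\langle\q_a\rangle,\langle\q_b\rangle$ have $a,b\notin P$.'' This is backwards: by definition $\B'(\g_{\Si})=\{\b : |\b(1)\cap\{Q^P\}|=2\}$, where $\{Q^P\}$ is the set of columns indexed by the \emph{complement} of $P$. So a cone with two rays outside $P$ and one inside $P$ is already in $\B'$, not in $\B\setminus\B'$. The correct reading — and what the paper uses — is that $\b\in\B(\g_{\Si})\setminus\B'(\g_{\Si})$ has exactly \emph{one} ray in $\{Q^P\}$ and \emph{two} rays in $\pc^*=\{Q_P\}$, i.e.\ $\b=\langle\q_i,\q_j,\q\rangle$ with $i,j\in P$ and $\q\in H_P$.

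This inversion wrecks your planar picture. You imagine $\ell_P$ separating a vertex of $\b$ from the opposite edge, i.e.\ cutting through the triangle. But $H_P$ is a supporting hyperplane of the full cone $\gkz$: every column of $Q$ lies on one closed side, the columns in $\{Q^P\}$ lie \emph{on} $H_P$ (because $\pc$ is nef, so the primitive relation has zero entries exactly off $P$), and the columns in $\pc^*$ lie strictly off it. Hence $\b$ touches $H_P$ only along the single ray $\langle\q\rangle$, and $\g_{\Si}\cap H_P$ is at most that same ray — a point in the section, not a segment with two distinguishable endpoints. The ``walk along the facet $H_P\cap\gkz$'' that your second and third steps rely on therefore has nowhere to go: there is no nondegenerate segment $\g_{\Si}\cap\ell_P$ with two endpoints, and no chambers of a boundary subdivision to land in. (You also slip in the hypothesis that $\g_{\Si}$ is bordering w.r.t.\ $H_P$; the paper does not assume this at this stage — bordering is precisely what Theorem~\ref{thm:} is trying to establish, and Lemma~\ref{lm:intersezione} is a step towards it.)

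The paper's actual argument is algebraic and much shorter: it invokes Proposition~\ref{prop:primitive}.($ii.4$), which gives $\g_{\Si}\subseteq\langle Q^{P\setminus\{i\}}\rangle$ and $\g_{\Si}\subseteq\langle Q^{P\setminus\{j\}}\rangle$. Intersecting the first with $\b$ and observing that $\g_{\Si}$ is full-dimensional (hence cannot sit inside the two-dimensional cone $\langle\q_i,\q\rangle$) produces a column $\q'\in\{Q^P\}$ with $\g_{\Si}\subseteq\langle\q_i,\q',\q\rangle=:\b_1\in\B'(\g_\Si)$; the symmetric argument with $j$ gives $\b_2$. No walking on the facet, no endpoints of $\g_{\Si}\cap H_P$, no dimension count on $\b_1\cap\b_2$ — the containment drops out of the construction. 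Your opening paragraph (the observation, via Lemma~\ref{lm:bordante} and ($i.3$), that $|\b(1)\cap\{Q^P\}|\in\{1,2\}$) is correct and agrees with the paper; everything after the mis-identification of $\B\setminus\B'$ does not.
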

\begin{proof} First of all recall that every $\b\in\B(\g_\Si)$ is a simplicial cone generated by $r=3$ vectors in $F^3_+$, by the definition of a bunch of cones we gave above in \S~\ref{sez:secondary}. Then
  Lemma \ref{lm:bordante} implies that $|\b(1)\cap\pc^*|=2$ for every $\b\in\B(\g_{\Si})\setminus\B'(\g_{\Si})$, since $\pc^*$ is given by all columns of $Q$ not belonging to $\{Q^P\}$, as defined in \S~\ref{sez:primitive}. Therefore $$\b=\langle\q_i,\q_j,\q\rangle\,,\quad\text{with $\q_i,\q_j\in\pc^*$ and $\q\in H_P$.}$$
  $\pc$ is a primitive collection, meaning that $\g_{\Si}\subseteq\langle Q^{P\setminus\{i\}}\rangle\cap\langle Q^{P\setminus\{j\}}\rangle$, by Proposition~\ref{prop:primitive}.($ii.4$). Notice that $\langle Q^{P\setminus\{i\}}\rangle$ is a cone admitting the unique ray $\langle\q_i\rangle$ outside the support plane $H_P$, and analogously for $\langle Q^{P\setminus\{j\}}\rangle$: see Fig.~\ref{fig2} for fixing ideas.

  In particular $\g_{\Si}\subseteq \langle Q^{P\setminus\{i\}}\rangle\cap\b$, meaning that there exists a column $\q'$ of $Q$ in $ H_P$, lying on the same side of $\q_j$ with respect to the facet $\langle \q_i,\q\rangle$ of $\beta$, and such that $\g_{\Si}\subseteq\langle\q_i,\q',\q\rangle$; otherwise $\g_{\Si}$ would be included in the 2-dimensional cone $\langle\q_i,\q\rangle$, which is not possible as the fan $\Si$ determines a projective variety.  Analogously there exists a column $\q''$ of $Q$ in $ H_P$ lying on the same side of $\q_i$ with respect to the facet $\langle \q_j,\q\rangle$ of $\beta$, and such that $\g_{\Si}\subseteq\langle\q_j,\q',\q\rangle$. 	Set $\b_1:=\langle\q_i,\q',\q\rangle\,,\,\b_2:=\langle\q_j,\q'',\q\rangle$; then it is easy to verify that $\beta_1\cap\beta_2\subseteq\beta$. Moreover $\beta_1,\beta_2\in\B(\g_{\Si})\setminus\B'(\g_{\Si})$, as $\q,\q',\q''\in Q^P$.

\end{proof}

\begin{corollary}\label{cor:intersezione}
$\g_{\Si}=\bigcap_{\b\in\B'(\g_{\Si})} \b$
\end{corollary}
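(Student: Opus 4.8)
The plan is to deduce this directly from the chamber formula (\ref{camera}), namely $\g_{\Si}=\bigcap_{\b\in\B(\g_{\Si})}\b$, by showing that the cones lying in $\B(\g_{\Si})\setminus\B'(\g_{\Si})$ are redundant in that intersection.

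First I would record the trivial inclusion: since $\B'(\g_{\Si})\subseteq\B(\g_{\Si})$, formula (\ref{camera}) immediately gives $\g_{\Si}\subseteq\bigcap_{\b\in\B'(\g_{\Si})}\b$. For the reverse inclusion, set $\mu:=\bigcap_{\b\in\B'(\g_{\Si})}\b$. Note first that $\B'(\g_{\Si})\neq\emptyset$: the bunch $\B(\g_{\Si})$ is nonempty (a chamber is full--dimensional and equals the intersection of its bunch), and any $\b\in\B(\g_{\Si})$ either already belongs to $\B'(\g_{\Si})$ or, by Lemma \ref{lm:intersezione}, produces elements $\b_1,\b_2\in\B'(\g_{\Si})$; in either case $\B'(\g_{\Si})\neq\emptyset$, so $\mu$ is a well defined cone.

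Now for each $\b\in\B(\g_{\Si})\setminus\B'(\g_{\Si})$, Lemma \ref{lm:intersezione} provides $\b_1,\b_2\in\B'(\g_{\Si})$ with $\b_1\cap\b_2\subseteq\b$; hence $\mu\subseteq\b_1\cap\b_2\subseteq\b$. Therefore $\mu$ is contained in every cone of $\B(\g_{\Si})$, so by (\ref{camera})
$$\mu=\left(\bigcap_{\b\in\B'(\g_{\Si})}\b\right)\cap\left(\bigcap_{\b\in\B(\g_{\Si})\setminus\B'(\g_{\Si})}\b\right)=\bigcap_{\b\in\B(\g_{\Si})}\b=\g_{\Si}\,.$$
Combining the two inclusions yields the claim.

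I do not expect a genuine obstacle here: all the substance sits in Lemma \ref{lm:intersezione}, and the corollary is merely the remark that discarding the redundant cones from the intersection defining $\g_{\Si}$ leaves it unchanged. The only minor point to take care of is the nonemptiness of $\B'(\g_{\Si})$, which, as noted, follows at once from Lemmas \ref{lm:bordante} and \ref{lm:intersezione}.
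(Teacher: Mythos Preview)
Your proof is correct and follows essentially the same route as the paper: both derive the nontrivial inclusion from the chamber formula $\g_{\Si}=\bigcap_{\b\in\B(\g_{\Si})}\b$ together with Lemma~\ref{lm:intersezione}, the paper phrasing it elementwise by contrapositive while you phrase it as a set inclusion $\mu\subseteq\b$ for every $\b\in\B(\g_{\Si})$. Your extra remark on the nonemptiness of $\B'(\g_{\Si})$ is a minor completeness point the paper leaves implicit.
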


\begin{proof}
  By Lemma \ref{lm:intersezione}, clearly $\g_{\Si}\subseteq\bigcap_{\b\in\B'(\g_{\Si})} \b$.\\
  For the converse, notice that $x\not\in\g_{\Si}$ implies $x\not\in\bigcap_{\b\in\B'(\g_{\Si})} \b$. In fact, recalling relation (\ref{camera}) following Theorem~\ref{thm:Gale sui coni}, if $x\not\in\g_{\Si}$ then there exists $\b\in\B(\g_{\Si})$ such that $x\not\in\b$. Then Lemma~\ref{lm:intersezione} gives two cones $\b_1,\b_2\in\B'(\g_{\Si})$ such that $x\not\in\b_1\cap\b_2$.
  \end{proof}
\begin{figure}
\begin{center}
\includegraphics[width=8truecm]{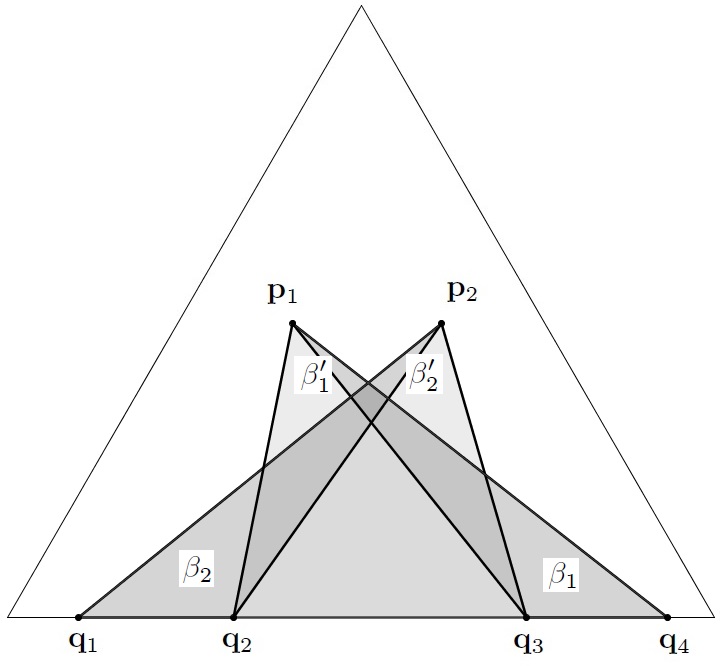}
\caption{\label{fig3}A smooth chamber cannot be no bordering}
\end{center}
\end{figure}
Consequently, Corollary \ref{cor:intersezione} allows us to conclude the proof of Theorem~\ref{thm:} by observing that the situation described in Fig.\,\ref{fig3} cannot occur when $\Si$ is a regular fan.
In fact, consider $\b_1=\langle\pp_1,\q_3,\q_4\rangle, \b_2=\langle\pp_2,\q_1,\q_2\rangle\in\B'(\g_{\Si})$ with $\pp_i\in\pc^*$ and $\q_j\in H_P$, for $1\leq i\leq 2$ and $1\leq j\leq 4$.
Since $H_P$ is a plane, $\q_3\not\in\langle\q_1,\q_2\rangle$ and $\q_1,\q_2$ are linearly independent, then
\begin{equation*}
  \exists\,a,b\in\Z\ :\ \q_3=a\q_1+b\q_2\quad\text{and}\quad ab<0\,.
\end{equation*}
Let $\n$ be the inward normal vector to the facet of $\b_2$ passing through $\pp_2,\q_1$. Clearly the cone $\b_2'=\langle\pp_2,\q_1,\q_3\rangle$ belongs to $\B'(\g_{\Si})$. Then, recalling condition (\ref{Gale su det}), the regularity of $\Si$ imposes that
\begin{equation*}
  1=\det(\pp_2,\q_1,\q_3)=\n\cdot( a\q_1+b\q_2)=b \ \Rightarrow\ a<0\,.
\end{equation*}
Analogously, $\b_1'=\langle\pp_1,\q_2,\q_4\rangle\in\B'(\g_{\Si})$ gives $\q_2=a'\q_4+\q_3$ with $a'<0$. Then
\begin{equation*}
  \q_3=a\q_1+\q_2=a\q_1+a'\q_4+\q_3\ \Rightarrow\ a\q_2+a'\q_4=0
\end{equation*}
contradicting the positivity of both $\q_2$ and $\q_4$.

\subsection{An alternative proof of Theorem \ref{thm:}}\label{ssez:batyrev}

We are now going to sketch an alternative proof of Theorem \ref{thm:} obtained by working out a comment of Cinzia Casagrande to a first draft of the present paper. In spite of the fact that the following proof may look easier than the previous one, notice that the former requires the deep Batyrev's analysis of primitive relations, given in \cite[\S~5,~\S~6]{Batyrev91} and needed to prove its Thm.\,6.6. Moreover the previous proof shed some light on the case of higher rank values, involving results whose holdness goes beyond the bound $r\leq 3$ (as for Lemma \ref{lm:bordante} and Theorem \ref{thm:OM}), on the contrary of Batyrev's analysis of primitive relations which is strictly related with the Picard number $r=3$.

First of all, let us recall that \cite[Thm.\,1.6]{Cox-vRenesse} exhibits the Mori cone $\NE(X(\Si))$ as the cone generated by the primitive relations of $\Si$. Dually this means that every hyperplane supporting a primitive collection either cuts out a facet of $\Nef(X(\Si))=\g_{\Si}$ or is a positive linear combination of some further supporting hyperplanes. When $r=2$ bordering means maximally bordering and the Kleinschmidt classification and Batyrev's considerations given in \cite[\S\,4]{Batyrev91} end up the proof. Let us then assume $r=3$. Then \cite[Thm.\,5.7]{Batyrev91} shows that $\Si$ admits either 3 or 5 primitive relations. In the first case, $\g_{\Si}$ turns out to be simplicial and \cite[Prop.\,4.1,\,Thm.\,4.3]{Batyrev91} show that $\Si$ is a splitting fan admitting a nef primitive collection, hence proving that $\g_{\Si}$ is (maximally) bordering. In the second case, \cite[Thm.~5.7,~Thm.~6.6]{Batyrev91} shows that $\Si$ has 5 primitive relations given by
\begin{eqnarray}
  \v + \y &=& c\z + b\t + \t \label{1}\\
  \y + \z &=& \u \label{2}\\
  \z + \t &=& 0 \label{3}\\
  \t + \u &=& \y \label{4}\\
  \u + \v &=& c\z + b\t \label{5}
\end{eqnarray}
where $$\v:=\sum_{i=1}^v \v_i\,,\,\y:=\sum_{i=1}^y \y_i\,,\,c\z:=\sum_{i=1}^z c_i\z_i\,,\,\v:=\sum_{i=1}^v \v_i\,,\,b\t:=\sum_{i=1}^t b_i\t_i\,,\,\u:=\sum_{i=1}^u \u_i$$
 with $\{\v_1,\ldots,\v_v\}$, $\{\y_1,\ldots,\y_y\}$, $\{\z_1,\ldots,\z_z\}$, $\{\t_1,\ldots,\t_t\}$, $\{\u_1,\ldots,\u_u\}$ di\-sjoint subsets of columns of a fan matrix $V$ of $X$ such that $v+y+z+t+u=n+3$ and $b_1,\ldots,b_t,c_1,\ldots,c_z$ non\-ne\-ga\-tive integral coefficients. Therefore $(\ref{3})=(\ref{2})+(\ref{4})$ and $(\ref{5})=(\ref{1})+(\ref{4})$, showing that $\g_{\Si}$ is simplicial and bordering w.r.t. the support of the nef primitive relation (\ref{3}).

\subsection{About the proof of Theorem \ref{thm:intro}}\label{ssez:intbord}

Given a nef primitive collection $\pc=\{V_P\}$ whose support $H_P$ cuts out a facet of $\gkz=\langle Q\rangle$, Theorem \ref{thm:} ensures that the chamber $\g_{\Si}$ of a regular fan $\Si$ is bordering w.r.t. $H_P$. When $r=3$, there are three cases:
\begin{enumerate}
  \item $\g_{\Si}$ is maxbord w.r.t $H_P$, which is $\dim(\g_{\Si}\cap H_P)=2$: in this case there exists a fibration $f:X(\Si)\to B$ over a smooth toric variety $B$ of rank $s=2$ and dimension $m<n$, hence $B$ is a PTB over a projective space \cite{Kleinschmidt}, whose fibre $F$ has rank $r-s=1$, giving $F\cong\P^{n-m}$ and proving part (3) of Theorem~\ref{thm:intro}. As already mentioned in the introduction, the existence of $f$ can be deduced as a consequence of one of the following considerations:
      \begin{itemize}
        \item as the morphism associated with the linear system of any non trivial divisor whose class is in $\g_{\Si}\cap H_P$, which is clearly nef and non-big,
        \item $H_P$ cuts out a facet of $\g_{\Si}=\Nef(X(\Si))$, hence dually determines the extremal ray of the Mori cone $\NE(X)$ generated by the inward normal vector $\n_P$, which is the numerical class of the primitive relation $r_{\Z}(\pc)$: think of $f$ as the contraction of $\langle\n_P\rangle$, in the sense of Mori Theory, which is a fibration without any exceptional locus \cite[Thm\,1.5]{Reid83}, \cite[Cor.\,2.4]{Casagrande};
        \item Batyrev's result \cite[prop.\,4.1]{Batyrev91}, describing $X$ as a PTB over $B$, when is proved that the primitive collection $\pc$ is disjoint from any further primitive collection of $\Si$: this is equivalent to the maxbord condition for $\g_{\Si}$ by \cite[Prop.\,3.25]{RT-Qfproj};
        \item our result \cite[Cor.~3.23]{RT-Qfproj}, which in addition gives a complete description of the PTB $B$ starting from the weight matrix $Q$.
      \end{itemize}
  \item $\g_{\Si}$ is intbord and not maxbord w.r.t. $H_P$; then $\dim(\g_{\Si}\cap H_P)=1$ and conditions ($i$) and ($ii$) in Definition~\ref{def:bordering}.(3) occur; the plane $H'$ cuts out a facet of $\g_{\Si}=\Nef(X)$\,: dualizing the description of $NE(X)$ given by D.~Cox and C.~von Renesse \cite[Propositions~1.9 and 1.10]{Cox-vRenesse}, $H'$ turns out to be the support of a primitive collection $\pc'=\{V_{P'}\}$; the numerical class $\n'$ of $\pc'$ defines a contractible class giving rise to a contraction $\phi:X\to X'$ such that
      $$\Exc(\phi)=\bigcap_{j\,:\,\n'\cdot\q_j<0}D_j\quad\text{where}\ D_j=\overline{O(\langle\v_j\rangle)}\in\Weil(X)$$
      (see \cite[Thm.\,2.2]{Casagrande}); in particular the pull back $\phi^*(\Nef(X'))=H'\cap\g_{\Si}$ meaning that $X'$ has rank $r-1=2$; there follows two possibilities:
      \begin{itemize}
        \item[(a)] $\phi$ is a blow up, hence $\Exc(\phi)$ is a divisor and $X'$ is a smooth toric variety of Picard number $r'=2$: an example of this situation is given by $\g_2$ in Example~\ref{ex:1} and Fig.\ref{fig5}; applying part (2) of Theorem~\ref{thm:intro}, $X'$ turns out to be a PTB over a projective space, giving a fibration $$f:X'\to B\cong\P^{m}$$
      with $m<n$; the composition $\vf=f\circ\phi$ gives the fibrational contraction $\vf$ claimed in part (4) of Theorem~\ref{thm:intro}; notice that this case can occur if and only if $|\{j\,:\,\n'\cdot\q_j<0\}|=1$, which is, recalling (\ref{Mov}), if and only if $H'$ cuts out a facet of $\Mov(X)$;
      \begin{figure}
\begin{center}
\includegraphics[width=8truecm]{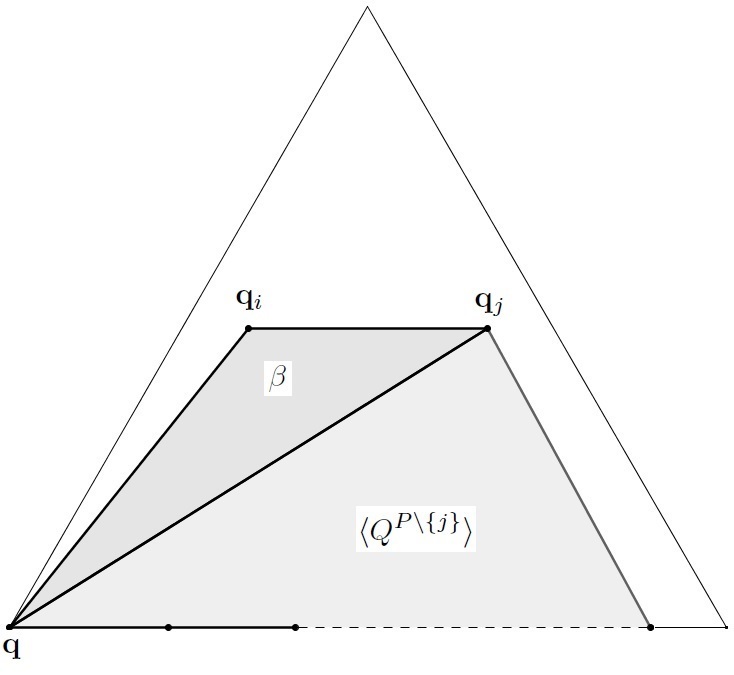}
\caption{\label{fig4}$\g_{\Si}$ no maxbord and no intbord cannot occur when $\g_{\Si}$ is a regular chamber}
\end{center}
\end{figure}
        \item[(b)] $\phi$ is a small contraction and $H'$ cuts out an internal wall of $\Mov(X)$, separating the two chambers $\g_{\Si},\g_{\widetilde{\Si}}$: therefore $X(\Si)$ and $\widetilde{X}(\widetilde{\Si})$ are birational toric varieties isomorphic in codimension 1 i.e. they are connected by an elementary toric flip determined by an internal wall crossing \cite[(15.3.14)]{CLS}; then there exists a further hyperplane $H''$, cutting out a facet of $\g_{\widetilde{\Si}}$, such that $$H_P\cap\g_{\Si}=H'\cap H_P\cap \g_{\Si}=H''\cap H'\cap H_P\cap\g_{\Si}\,;$$
            up to apply a finite number of such elementary flips we can assume that $H''$ cuts out a facet of $\Mov(X)$: Example~\ref{ex:2} and Fig.\,\ref{fig5b} give an instance of this situation; we have then two possibilities
             \begin{itemize}
               \item either $H''$ cuts out a facet of $\gkz$, which is $\g_{\widetilde{\Si}}$ is maxbord and $\widetilde{X}$ is a weighted projective toric bundle (WPTB) $\widetilde{X}\to B$, as in \cite[\S~3.3]{RT-Qfproj}: in this case the base $B$ is $\Q$-factorial with rank 2 and it is still a WPTB $B\to \P^m$ over a projective space,
               \item or $\widetilde{X}$ is a blow up of a WPTB $X''\to B\cong\P^m$, since $X''$ has rank $r-1=2$;
             \end{itemize}
             in any case, $H_P\cap \g_{\Si}$ is the pull back $\vf^*(\Nef(\P^m))$ by a fiber type contraction morphism $\vf$ (see considerations following Prop.\,2.5 in \cite{Casagrande13}), so proving part (5) of Theorem~\ref{thm:intro};
      \end{itemize}
to finish the proof we have to notice that $\Exc(\phi)$ cannot be trivial, in fact
      \begin{itemize}
        \item if $|\{j\,:\,\n'\cdot\q_j<0\}|=0$ then $\g_{\Si}$ would be maxbord w.r.t. $H'$ and we can apply the previous argument (1), by replacing $H_P$ with $H'$, and obtaining Theorem~\ref{thm:intro}.(3),

        \item Proposition~\ref{prop:primitive} gives $|\pc'|\geq 2$ and at least $r-1=2$ columns of $Q$ determines $H'$, giving
        $$|\{j\,:\,\n'\cdot\q_j<0\}|\leq n+r-2-2=n-1\ \Rightarrow\ \dim(\Exc(\phi))\geq 1\,;$$
      \end{itemize}
      finally notice that last case (b) cannot occur when $n\leq 3$: in fact, in this case $\g_{\Si}$ is an internal chamber of $\Mov(X)$, intbord and not maxbord w.r.t. $H_P$ and such that every plane $H'$, cutting a facet of $\g_\Si$ and verifying conditions ($i$) and ($ii$) in Definition~\ref{def:bordering}.(3), cannot cut out a facet of $\Mov(X)$, otherwise we would be in case (a): the situation described in Fig.\,\ref{fig5b} may be useful to fixing ideas; then there should exist at least 7 columns of the weight matrix $Q$,  namely given by:
      \begin{itemize}
        \item $\langle\q_3\rangle = H_P\cap\g_{\Si}$,
        \item two columns $\q_6,\q_7$ determining the two facet of $\g_{\Si}$ meeting in $\q_3$; that is $\langle\q_3,\q_6\rangle$ and $\langle\q_3,\q_7\rangle$ are facets of $\g_{\Si}$ cut out by planes $H'$ and $H''$, respectively, admitting inward normal vectors $\n'$ and $\n''$, respectively,
        \item two further columns $\q_2,\q_4$ determining two facets of $\Mov(X)$; that is $\langle\q_2,\q_3\rangle$ and $\langle\q_3,\q_4\rangle$ are facets of $\Mov(X)$\,: notice that these two facets have to be necessarily distinct from the previous facets of $\g_\Si=\Nef(X)$, otherwise we would be in case (a), getting a divisorial contraction determined by either $\n'$ or $\n''$,
        \item finally, two further columns $\q_1,\q_5$ determining the facet $\langle\q_1,\q_5\rangle$ of $\gkz=\Eff(X)$\,, cut out by $H_P$\,, and verifying the intbord condition ($ii$) in Definition~\ref{def:bordering}.(3), for $\g_{\Si}$\,; notice that $\q_1,\q_2,\q_4,\q_5$ have to give distinct columns of $Q$ because $\n'$ and $\n''$ have to satisfy the following numerical conditions
            $$|\{j\,:\,\n'\cdot\q_j<0\}|\geq 2\quad,\quad|\{j\,:\,\n''\cdot\q_j<0\}|\geq 2$$
            otherwise we would have a divisorial contraction, as in case (a);
      \end{itemize}
     but
      $$ 7\leq n+r =n+3\ \Rightarrow\ n\geq 4\,;$$
  \item $\g_{\Si}$ is no intbord w.r.t $H_P$; since Theorem \ref{thm:} implies that $\g_{\Si}$ is bordering w.r.t. $H_P$, we are necessarily in the case represented in Fig.\,\ref{fig4}, which cannot occur since $\pc=\{V_P\}$ is a primitive collection. In fact, if $\b=\langle\q,\q_i,\q_j\rangle\in\B(\g_{\Si})$ with $\q_i,\q_j\in\pc^*$, $\q$ be a column of $Q$ belonging to the intersection of $H_P$ with a further facet of $\gkz$, then Proposition~\ref{prop:primitive}.($ii.4$) implies that $\g_{\Si}\subseteq \langle Q^{P\setminus\{j\}}\rangle$, contradicting the fact that $\g_{\Si}\subseteq\b$ (see Fig.\,\ref{fig4}).
\end{enumerate}

\subsection{Examples}\label{ssez:ex}
\begin{example}\label{ex:1}Let us here consider and continue an example partly discussed in \cite[Ex.\,3.40]{RT-Qfproj}. Consider the following reduced $F$ and $W$ matrices, $V$ and $Q$ respectively,
\begin{figure}
\begin{center}
\includegraphics[width=7truecm]{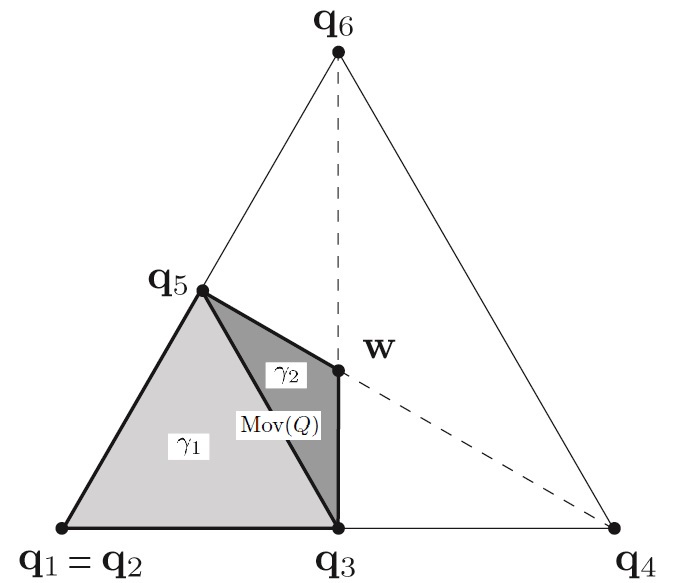}
\caption{\label{fig5}The section of the cone $\Mov(Q)$ and its chambers, inside the Gale dual cone $\mathcal{Q}=F^3_+$, as cut out by the plane $\sum_{i=1}^3x_i^2=1$, when $V$ is the fan matrix given in Example~\ref{ex:1}}
\end{center}
\end{figure}
\begin{equation*}
    V=\left(
        \begin{array}{cccccc}
          1 & 0 & 0 & 0 & -1 & 1 \\
          0 & 1 & 0 & 0 & -1 & 1 \\
          0 & 0 & 1 & -1 & -1 & 1 \\
        \end{array}
      \right)\ \Longrightarrow\ Q=\left(
                             \begin{array}{cccccc}
                               1 & 1 & 1 & 0 & 1 & 0 \\
                               0 & 0 & 1 & 1 & 0 & 0 \\
                               0 & 0 & 0 & 0 & 1 & 1 \\
                             \end{array}
                           \right)=\G(V)\,.
\end{equation*}
One can visualize the Gale dual cone $\gkz=\langle Q\rangle =F^3_+$ and $\Mov(Q)\subseteq\gkz$ as in Fig.~\ref{fig5}. Recalling
\cite{Kleinschmidt-Sturmfels}, $\P\SF(V)=\SF(V)$ and the latter is described by the only two
chambers $\g_1,\g_2$ of $\Mov(Q)$ represented in Fig.\,\ref{fig5}. We get then two $\Q$--factorial projective toric varieties $X_i=X_i(\Si_{\g_i})\,,\ i=1,2\,,$ of dimension and rank 3: both $X_1$ and  $X_2$ are smooth. Since $r=3$, we are either in case (3) or in case (4) of Theorem~\ref{thm:intro}.

For $X_1$ we are in case (3), being $\g_1$ maxbord with respect to both the hyperplanes $H_2:x_2=0$ and $H_3:x_3=0$. This fact can also be deduced by the weight matrix $Q$, which is essentially invariant (i.e. up to reordering the columns to still get a REF) with respect to the exchange of the second and the third rows. We get then two fibrations $f':X_1\twoheadrightarrow B'$ and $f'':X_1\twoheadrightarrow B''$, over smooth toric surfaces of rank 2, $B', B''$, both geometrically given by the blow up of $\P^2$ in one point, $B'\cong\P\left(\mathcal{O}_{\P^1}\oplus\mathcal{O}_{\P^1}(1)\right)\cong B''$. This fact allows us to exhibit $X_1$ as a sequence of two consecutive PTB in twofold way:
$$\xymatrix{X_1\cong\P\left(\mathcal{O}_{B'}\oplus\mathcal{O}_{B'}(h)\right)\ar@{>>}[r]& B'\cong \P\left(\mathcal{O}_{\P^1}\oplus\mathcal{O}_{\P^1}(1)\right)\ar@{>>}[r] &\P^1}$$
$$\xymatrix{X_1\cong\P\left(\mathcal{O}_{B''}\oplus\mathcal{O}_{B''}(h)\right)\ar@{>>}[r]& B''\cong \P\left(\mathcal{O}_{\P^1}\oplus\mathcal{O}_{\P^1}(1)\right)\ar@{>>}[r] &\P^1}\,.$$
See \cite[Ex.\,3.40]{RT-Qfproj} for further details.

For $X_2$ we are in case (4) of Theorem~\ref{thm:intro}, being $\g_2$ intbord and no maxbord with respect to both the hyperplanes $H_2$ and $H_3$: we still get a symmetry giving rise to two isomorphic fibrational contractions $X_2\twoheadrightarrow\P^2$. Let us consider the case of the primitive collection supported by $H_P=H_3$. Recalling notation introduced in \S\,\ref{ssez:intbord}.(2), $H'$ is given by the hyperplane passing through $\q_3$ and $\q_6$, corresponding to blowing down the divisor $D_4=\overline{O(\v_4)}$, so giving the contracting part $\phi':X_2\to B'$, over a toric projective threefold $B'$ of rank 2. This contraction can be described by performing the following operation on the fan and weight matrices:
\begin{itemize}
  \item suppress the column $\v_4$ in the matrix $V$, so getting
  \begin{equation*}
    V'=\left(
        \begin{array}{ccccc}
          1 & 0 & 0 & -1 & 1 \\
          0 & 1 & 0  & -1 & 1 \\
          0 & 0 & 1  & -1 & 1 \\
        \end{array}
      \right)
      \end{equation*}
  \item suppress the column $\q_4$ and the second row in the matrix $Q$, so getting
      \begin{equation*}
      Q'=\left(
                             \begin{array}{ccccc}
                               1 & 1 & 1 &  1 & 0 \\
                               0 & 0 & 0 &  1 & 1 \\
                             \end{array}
                           \right)=\G(V')\,.
\end{equation*}
\end{itemize}
Then $B'$ is the blow up of $\P^3$ in one point, giving the geometric description of $X_2$ as the blow up of $\P^3$ in two distinct points. The fibering part is then obtained by writing $B'\cong\P(\mathcal{O}_{\P^2}\oplus\mathcal{O}_{\P^2}(1))$, so giving $f':B'\twoheadrightarrow\P^2$, with fiber $F\cong\P^1$. We get then the fibrational contraction $f'\circ\phi':X_2\to\P^2$ with fiber $\P^1$ and exceptional locus $D_4\cong\P^2$.
\end{example}
\begin{figure}
\begin{center}
\includegraphics[width=7truecm]{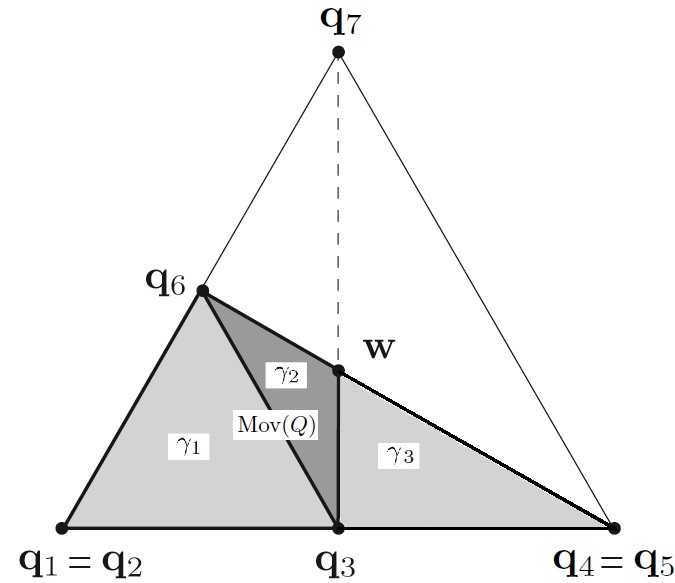}
\caption{\label{fig5b}The section of the cone $\Mov(Q)$ and its chambers, inside the Gale dual cone $\mathcal{Q}=F^3_+$, as cut out by the plane $\sum_{i=1}^3x_i^2=1$, when $V$ is the fan matrix given in Example~\ref{ex:2}}
\end{center}
\end{figure}
\begin{example}\label{ex:2}
Let us add a further generator to the weight matrix of the previous example, so getting
\begin{equation*}
  Q=\left(
                             \begin{array}{ccccccc}
                               1 & 1 & 1 & 0 & 0 & 1 & 0 \\
                               0 & 0 & 1 & 1 & 1 & 0 & 0 \\
                               0 & 0 & 0 & 0 & 0 & 1 & 1 \\
                             \end{array}
                           \right)\ \Longrightarrow\ V=\G(Q)=\left(
        \begin{array}{ccccccc}
          1 & 0 & 0 & 0 & 0 & -1 & 1 \\
          0 & 1 & 0 & 0 & 0 & -1 & 1 \\
          0 & 0 & 1 & 0 & -1 & -1 & 1 \\
          0 & 0 & 0 & 1 & -1 & 0 & 0 \\
        \end{array}
      \right)
\end{equation*}
The associated Gale dual cone $\gkz=\langle Q\rangle =F^3_+$ and $\Mov(Q)\subseteq\gkz$ are represented in Fig.~\ref{fig5b}. Chambers $\g_1$ and $\g_3$ are totally maxbord and maxbord, respectively, so giving the following sequences of two consecutive PTB
\begin{eqnarray}\label{fibrazioni}
\nonumber
   &\xymatrix{X_1\cong\P\left(\mathcal{O}_{B_1}\oplus\mathcal{O}_{B_1}(h)\right)\ar@{>>}[r]& B_1\cong \P\left(\mathcal{O}_{\P^1}\oplus\mathcal{O}_{\P^1}(1)^{\oplus 2}\right)\ar@{>>}[r] &\P^1}& \\
  &\xymatrix{X_1\cong\P\left(\mathcal{O}_{B_2}\oplus\mathcal{O}_{B_2}(h)^{\oplus 2}\right)\ar@{>>}[r]& B_2\cong \P\left(\mathcal{O}_{\P^1}\oplus\mathcal{O}_{\P^1}(1)\right)\ar@{>>}[r] &\P^1}& \\
  \nonumber
   &\xymatrix{X_3\cong\P\left(\mathcal{O}_{B_3}\oplus\mathcal{O}_{B_3}(h)\right)\ar@{>>}[r]& B_3\cong \P\left(\mathcal{O}_{\P^1}\oplus\mathcal{O}_{\P^1}(1)^{\oplus 2}\right)\ar@{>>}[r] &\P^1}\,.&
\end{eqnarray}
Let us then focus on the case of chamber $\g_2$: we get a fiber type contraction $\vf:X_2\to\P^1$ such that $\langle\q_3\rangle=\vf^*(\Nef(\P^1))$ where $\vf$ can be obtained in two ways:
\begin{itemize}
  \item starting from the first fibration $X_1\to\P^1$ in (\ref{fibrazioni}), whose fibre is locally isomorphic to $\P^1\times\P^2$, contract a fibre $\P^1$ of the fibration $X_1\to B_1$ and resolve the obtained singularity by a small blow up inserting an exceptional $E\cong\P^2$: this gives a fiber type contraction morphism $X_2\to\P^1$ which is given by the fibration $X_1\to\P^1$ in which one fibre is replaced by $E\times\P^2$;
  \item the same fibre type contraction $\vf$ can be symmetrically obtained from the third fibration $X_3\to\P^1$ in (\ref{fibrazioni}), by proceeding in the same way.
\end{itemize}
\end{example}

\section{What about $r\geq 4$\,?}\label{sez:r>3}

This section is meant to study any possible extension of Theorem~\ref{thm:} to higher values of the Picard number $r\geq~4$.

Let us first of all observe that, on the one hand, Theorem \ref{thm:} clearly holds in dimension 2 for any rank: in fact every smooth, complete, toric surface $X$ is projective and if $X\not\cong\P^2$ then there exists a morphism $f:X\to\P^1$, so that $f^*\mathcal{O}_{\P^1}(1)$ is nef and non-big.

On the other hand, let us recall that, in 2009, Fujino and Sato \cite{FS} produced a series of (counter-)examples, for every dimension $n\geq 3$ and every rank $r\geq 5$, of smooth, projective, toric varieties whose $\Nef$ cone is not bordering i.e. their nef divisors are all big divisors. Moreover in the case of complete and non-projective, smooth, toric varieties, the classical Oda's example \cite[Prop.~9.4]{OM},\cite[p.~84]{Oda} exhibits a  complete, non-projective, toric threefold (i.e. smooth and 3-dimensional variety) of rank 4, whose $\Nef$ cone is 2-dimensional and no bordering. This example can be easily generalized in grater dimension by augmenting the multiplicity of suitable columns inside a weight matrix, as done in \ref{ssez:generalizzazione} for the example \ref{ssez:esempio}.

Therefore it remains to understand what happens for smooth, projective, toric varieties of dimension $n\geq 3$ and Picard number $r=4$.

On the one hand, the Oda-Miyake result \cite[Thm.~9.6]{OM} allows us to conclude that Theorem \ref{thm:} can be extended to the case of dimension $n=3$ and rank $r=4$: in \S~\ref{ssez:OM} we will consider in more detail this case, leading us to a classification of all threefolds with Picard number $r\leq 4$ (see the following Theorem~\ref{thm:n=3,r=4}).

On the other hand, for $n\geq 4$ there is no hope of getting a similar statement, due to counterexamples \ref{ssez:esempio} and \ref{ssez:generalizzazione}. Let us point out that it is quite hard to get these examples by means of purely geometric considerations, as also Fujino and Sato observed \cite[pg.~1]{FS}. We discovered them with the help of Maple routines which enabled us to treating the combinatorics of the primary and the secondary fans. Unfortunately, we were not yet able to find time and willing of putting those routines in a sufficiently user-friendly shape: it is a work in progress. Anyway we can provide the raw Maple spreadsheet to anyone who was interested in.

The only property which can be arbitrarily generalized on the rank $r$ is given by Lemma \ref{lm:bordante}. In fact it holds for every $r\geq 1$, provided the existence of a primitive nef (hence bordering) collection $\pc=\{V_P\}$ for a simplicial  fan $\Si$: the latter is guaranteed for a regular and projective fan by Batyrev' results \cite[Prop.~3.2]{Batyrev91}. Here is a proof.

\begin{figure}
\begin{center}
\includegraphics[width=8truecm]{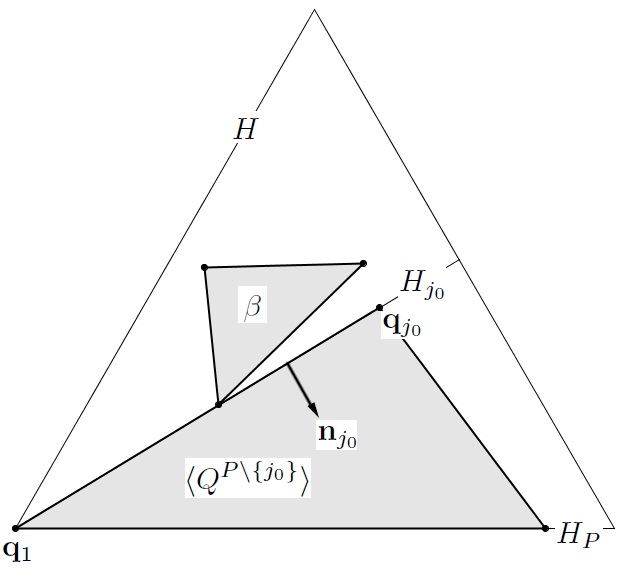}
\caption{\label{fig6}The argument proving Lemma \ref{lm:bordante} when $r=3$}
\end{center}
\end{figure}

\subsection{Proof of Lemma \ref{lm:bordante}}
  The support hyperplane $H_P\subset F^r_{\R}\cong\R^r$, of the primitive collection $\pc$, cut out a facet of $\gkz=\langle Q\rangle=\Eff(X(\Si))$. Let $H$ be a hyperplane cutting out an adjacent facet of $\gkz$. Then $H\cap H_P$ contains $r-2$ linearly independent   columns $\q_1,\ldots,\q_{r-2}$ of the weight matrix $Q$. For every $\q_j \in \pc^*$ let $H_j$ be the hyperplane passing through $\q_1,\ldots,\q_{r-2}$ and $\q_j$: then $H_j$ cuts out a facet of the cone $\langle Q^{P\setminus\{j \}}\rangle $, which contains $\gamma_\Sigma$ by Prpoposition~\ref{prop:primitive}.($ii.4$).  Let $\n_j$ be a non zero normal vector  to $H_j$ such that
  \begin{equation}\label{eq:qq1q}
 \n_j\cdot x\geq 0 \hbox{ for every } x\in\g_\Sigma.\end{equation}
  There certainly exists an index $j_0$ such that $\q_{j_0}\in\pc^*$ and $\n_{j_0}\cdot \q_j\leq 0$ for every $\q_j\in\pc^*$. By absurd let $\beta\in\B(\g_\Si)$ be such that $\beta(1)\cap\{Q^P\}=\emptyset$. Then $\n_{j_0}\cdot \pp\leq 0$ for every $\pp\in\beta(1)$. Since $\gamma$ is $r$-dimensional $\n_{j_0}\cdot x <0$ for every $x$ in the interior of $\gamma_\Sigma$, against (\ref{eq:qq1q}) (see Fig.\,\ref{fig6} for a description of  this situation when $r=3$).

\subsection{Toric projective threefolds of Picard number 4}\label{ssez:OM}

In the following \emph{threefold} means a smooth and 3-dimensional variety. As a consequence of \cite[Thm.~9.6]{OM} one gets the following
\begin{theorem}\label{thm:OM}
The statement of Theorem~\ref{thm:} can be extended to the case of toric, projective threefolds of Picard number $r=4$.
\end{theorem}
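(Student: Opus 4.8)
The plan is to deduce Theorem~\ref{thm:OM} directly from the Oda--Miyake classification of smooth projective toric threefolds of Picard number $4$, given in \cite[Thm.~9.6]{OM}. That result provides an explicit and finite list of the fan matrices (equivalently, of the primitive relations) of such threefolds. So the strategy is: for each variety $X$ appearing on the Oda--Miyake list, exhibit a nef primitive collection $\pc=\{V_P\}$ whose support hyperplane $H_P$ is bordering for the chamber $\g_\Si=\Nef(X)$, i.e.\ $\dim(\Nef(X)\cap H_P)\geq 1$ and $H_P$ cuts out a facet of $\overline{\Eff}(X)$. By Definition~\ref{def:bordering} (and the remark closing it), the second condition is automatic once $\pc$ is a \emph{nef} primitive collection; so the real content is to find, on each item of the list, a primitive collection all of whose primitive-relation coefficients are non-negative, and to check that the associated support hyperplane meets $\Nef(X)$ in dimension at least one. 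Equivalently, in the language of \S\ref{sez:r=3}, one checks that $\g_\Si$ is bordering with respect to $H_P$ by verifying that $H_P\cap\CQ$ is a facet of $\CQ=\overline{\Eff}(X)$ and that $H_P$ is a supporting hyperplane of $\Nef(X)$.

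The key steps, in order, are as follows. First, recall the Oda--Miyake normal forms: \cite[Thm.~9.6]{OM} lists the smooth complete (hence, by \cite{Kleinschmidt-Sturmfels}, projective since $r=4$ forces nothing extra here — one must actually restrict to the projective ones, which Oda--Miyake single out) toric threefolds of Picard number $4$ up to isomorphism, essentially as those obtained from $\P^3$, from PTBs over rank $\leq 3$ bases, and from a short list of ``sporadic'' blow-ups and flips; crucially the famous Oda non-projective example of rank $4$ is \emph{excluded} from the projective list, which is why the theorem works here and fails for non-projective fans. Second, for each family one writes down a fan matrix $V$ and its Gale dual $Q=\G(V)$ in REF positive form, reads off the secondary fan chamber $\g_\Si$ via the bunch $\B(\g_\Si)$ as in Theorem~\ref{thm:Gale sui coni}, and inspects the primitive collections. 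Third, one produces the desired $\pc$: for the PTB-type varieties the distinguished fiber-class primitive collection (the analogue of the bottom-row relation in \eqref{Q1}) is nef and maxbord, exactly as in the $r=2,3$ cases; for the remaining finitely many sporadic cases one checks directly — a short, bounded computation on each — that among the primitive relations listed by Oda--Miyake there is at least one with all coefficients $\geq 0$ whose support hyperplane is a facet of $\overline{\Eff}(X)$ and a supporting hyperplane of $\Nef(X)$. Finally, since Lemma~\ref{lm:bordante} holds for all $r$, and since in the threefold situation the combinatorics is tightly constrained ($n+r=7$ rays, $\g_\Si$ three-dimensional), one can often avoid case analysis by the same argument as in Fig.~\ref{fig3}: a non-bordering smooth chamber would force a positive linear dependence among columns of $Q$ of the kind $a\q+a'\q'=\0$ with $a,a'<0$, contradicting positivity of $Q$.

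The main obstacle I expect is purely bookkeeping rather than conceptual: the Oda--Miyake list is stated in their own notation and conventions (and spread over their \S9), so the bulk of the work is translating each entry into a reduced $F$--matrix, computing $\G(V)$, and verifying case by case that a nef primitive collection with bordering support exists — together with the separate and slightly delicate point of being careful to keep exactly the \emph{projective} members of the list and to notice where Oda's non-projective rank-$4$ threefold would have been the unique obstruction. The one place where something could genuinely go wrong is a sporadic small-modification case in which $\g_\Si$ is an internal chamber of $\Mov(Q)$; there one must still produce a bordering hyperplane $H_P$, and the argument is the intbord discussion of \S\ref{ssez:intbord}.(2) carried out concretely for that chamber. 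Once every entry of the finite list has been checked, Theorem~\ref{thm:OM} follows, and feeding it into the machinery of \S\ref{ssez:intbord} yields the classification Theorem~\ref{thm:n=3,r=4} of all toric threefolds of Picard number $r\leq 4$.
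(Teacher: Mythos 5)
Your proposal uses Oda--Miyake's Thm.~9.6 as an \emph{enumeration} and plans a case-by-case verification on a finite list, whereas the paper uses that theorem as a \emph{structural dichotomy} and runs an induction on the Picard number. Concretely, the paper reads off from \cite[Thm.~9.6]{OM}: a smooth projective toric threefold $X$ of rank $4$ that is \emph{not} the blow-up of a smooth toric variety of lower rank is a $\P^1$-bundle $\P(\mathcal{O}_B\oplus\mathcal{O}_B(D))$ over a rank-$3$ toric surface $B$. In the bundle case $\g_\Si$ is maxbord, and in the remaining case $X\to X'$ is a blow-up of a rank-$3$ threefold $X'$, to which the already-proved Theorem~\ref{thm:} applies: one then checks that the nef primitive collection $\pc$ of $\Si'$ (which has $|\pc|\leq 3$, since $n=3$ and $X'\not\cong\P^3$) remains a primitive collection of the blown-up fan $\Si$ and that $\dim(\g_\Si\cap H_P)=\dim(\g_{\Si'}\cap H'_P)\geq 1$. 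The paper thus buys itself a two-case proof by leveraging what was already established for $r\leq 3$, which is exactly the step your plan forgoes by replacing it with list inspection. Your route is feasible in principle (the Oda classification for $|\Si(1)|\leq 8$ is indeed finite), but it reproduces by hand what the dichotomy delivers conceptually, and as written you do not actually carry out the ``short, bounded computations'' on the sporadic items — so this remains a plan rather than a proof.

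One concrete thing to fix: your closing suggestion that one can ``often avoid case analysis by the same argument as in Fig.~\ref{fig3}'' does not hold once $r=4$. That argument hinges on $H_P$ being a plane in $F^3_{\R}$ and on the two-term decomposition $\q_3=a\q_1+b\q_2$; with $r=4$ the support hyperplane is three-dimensional, the decomposition argument no longer closes, and the paper's own example in \S\ref{ssez:esempio} exhibits a smooth, $4$-dimensional, rank-$4$ chamber $\g_{10}$ that is genuinely non-bordering — so positivity of $Q$ is not violated by non-bordering smooth chambers in general. Whatever constraint makes $n=3,\,r=4$ special is exactly the content of Oda--Miyake's theorem, not of the Fig.~\ref{fig3} contradiction; if you want the enumeration route, you must do the bookkeeping, and if you want a conceptual shortcut, the blow-up induction the paper uses is the available one.
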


\begin{proof}  Let $X(\Si)$ be a projective, smooth, toric, variety of dimension $n=3$ and rank $r=4$. If $X$ is not the blow up of a smooth toric variety of lower rank, then \cite[Thm.~9.6]{OM} ensures the existence of a smooth, projective, toric surface $B$ of rank 3 and such that $X\cong\P(\mathcal{O}_B\oplus\mathcal{O}_B(D))$, for some $\mathcal{O}_B(D)\in\Pic(B)$. This means that, calling $f:X\twoheadrightarrow B$ the canonical fibration,
\begin{equation*}
  \dim(\g_{\Si}\cap\partial\gkz)=\dim(f^*(\Nef(B)))=3\,.
\end{equation*}
Therefore $\g_{\Si}$ is maxbord w.r.t. a hyperplane $H$, which turns out to be the support of the primitive nef collection $\pc=\{V_P\}$, where $P\subset\{1,\ldots,n+r\}$ counts the columns of $Q$ not belonging to $H$.

\noindent We can then assume $X\to X'(\Si')$ to be the blow up of a projective, toric threefold $X'$ of rank 3. Then Theorem~\ref{thm:} applies to $X'$ exhibiting a nef, primitive collection $\pc$ of $\Si'$ such that $\Nef(X')=\g_{\Si'}$ is bordering w.r.t. the support hyperplane $H'_P$ of $\pc$ i.e. $\dim(\g_{\Si'}\cap H'_P)\geq 1$. Notice that $|\pc|\leq 3$ otherwise one can easily obtain $X'\cong\P^3$, against $\rk(X')=3$.  Let $V'\in\M(3,n+3;\Z)$ be a fan matrix of $X'$. Then a fan matrix of $X$ is given by $V=(V'|\v)$, where $\langle\v\rangle$ is the ray associated with the exceptional divisor $E$ of the blow up. In particular for every cone $\s\in\Si$ either $\s\in\Si'$ or $\langle\v\rangle\subseteq\s\subsetneq\s'\in\Si'$. This is enough to show that $\pc$ can't be contained in any cone of the fan $\Si$. Moreover, for every $j\in P$, consider a cone $\s'\in\Si'$ such that $\pc\backslash\{\v_j\}\subset\s'$. Then either $\s'\in\Si$ or there exists $\s\in\Si$ such that $\pc\backslash\{\v_j\}\cup\{\v\}\subset\s$, since $|\pc\backslash\{\v_j\}\cup\{\v\}|\leq 3$. Then $\pc$ turns out to be a primitive nef collection of $\Si$, too. Let $H_P$ be its support hyperplane in $\Cl(X)\otimes\R\cong\R^4$: clearly $H_P$ cuts out a facet of $\gkz$ since $\pc$ is nef. Moreover $\dim(\g_{\Si}\cap H_P)=\dim(\g_{\Si'}\cap H'_P)\geq 1$, showing that $\g_{\Si}$ is bordering w.r.t. $H_P$.
\end{proof}

The previous Theorem \ref{thm:OM} and Theorem \ref{thm:intro} allow us to obtain the following classification of threefolds with Picard number $r\leq 4$.
\begin{theorem}\label{thm:n=3,r=4} Let $X(\Si)$ be a toric, projective threefold of Picard number $2\leq r\leq 4$. Then there exists a fiber type contraction $\vf:X\twoheadrightarrow B$ over a smooth projective toric surface $B$, whose Picard number $r_B\leq 3$, such that either $B\cong\P^2$ or $B$ is (possibly a blow up of) a PTB over $\P^1$.
  In particular we get the following:\\
  \textbf{\emph{Classification:}}
\begin{enumerate}
   \item $r=1$ and $X\cong\P^3$;
    \item $r=2$ and $\vf$ is a fibration exhibiting $X$ as a PTB over $\P^2$;
    \item $r=3$ and $\vf:X\to B$ is a fibration exhibiting $X$ as a PTB over a smooth toric surface $B$ of Picard number 2, which is still a PTB over $\P^1$, meaning that $X$ is obtained from $\P^1$ by a sequence of two projectivizations of decomposable toric bundles;
    \item $r=3$ and $\vf=f\circ\phi$ is a fibrational contraction over $\P^2$ such that $\phi:X\to X'$ is a blow up and $f$ exhibits $X'$ as a PTB, $X'\cong\P(\mathcal{O}_{\P^2}\oplus\mathcal{O}_{\P^2}(a))$, for some $a\geq 0$;
  \item $r=4$ and $\vf:X\to B$ is a fibration exhibiting $X$ as a PTB over a blow up $B$ of a Hirzebruch surface $\P(\mathcal{O}_{\P^1}\oplus\mathcal{O}_{\P^1}(a))$, for some $a\geq 0$,
  \item $r=4$ and $\vf=f\circ\phi:X\to B$ is a fibrational contraction over a smooth toric surface of Picard number 2, such that $\phi:X\to X'$ is a blow up and $f:X'\to B$ is a PTB over  $B\cong\P(\mathcal{O}_{\P^1}\oplus\mathcal{O}_{\P^1}(a))$, for some $a\geq 0$;
  \item $r=4$ and $\vf=f\circ\phi$ is a fibrational contraction over $\P^m$, with $m=1,2$, such that $\phi=\phi''\circ\phi'$ is a double blow up of a toric threefold $X''$ and $f$ exhibits $X''$ as a PTB over $\P^m$.
\end{enumerate}
\end{theorem}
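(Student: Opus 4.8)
The plan is to obtain the classification by unwinding, in dimension $n=3$, the results already available: Theorem~\ref{thm:intro} for Picard number $\le 3$, its extension Theorem~\ref{thm:OM} to projective toric threefolds of Picard number $4$, and the Oda--Miyake structure theorem \cite[Thm.~9.6]{OM}, together with Kleinschmidt's classification \cite{Kleinschmidt} of rank-$2$ smooth complete toric varieties and the elementary classification of smooth complete toric surfaces used to name the low-rank pieces. Apart from Theorems~\ref{thm:intro} and \ref{thm:OM} no new geometric input is required.

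I would first settle $r\le 3$. For $r=1$ one has $X\cong\P^3$ (case (1)). For $2\le r\le 3$ Theorem~\ref{thm:intro} applies with $n=3$, and since $n\le 3$ its case (5) does not occur; hence for $r=2$ the contraction $\vf:X\to B$ is a fibration exhibiting $X$ as a PTB over a projective space $B$ of dimension $\le 2$ (case (2)), while for $r=3$ either $\vf:X\to B$ is a fibration with $B$ a smooth complete toric surface of Picard number $2$ --- a Hirzebruch surface $\P(\mathcal{O}_{\P^1}\oplus\mathcal{O}_{\P^1}(a))$ by \cite{Kleinschmidt}, giving case (3) --- or $\vf=f\circ\phi$ is a fibrational contraction over $\P^m$ with divisorial exceptional locus, $\phi:X\to X'$ the blow-up of a smooth projective toric threefold $X'$ of rank $2$ and $f$ its projective bundle structure; reading Kleinschmidt's classification of $X'$ on the secondary fan, as for the variety $X_2$ of Example~\ref{ex:1}, then identifies $B$ with $\P^2$ and $X'$ with $\P(\mathcal{O}_{\P^2}\oplus\mathcal{O}_{\P^2}(a))$, which is case (4).

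For $r=4$ the starting point is Theorem~\ref{thm:OM}: the fan $\Si$ carries a nef primitive collection $\pc=\{V_P\}$ for which $\g_{\Si}=\Nef(X)$ is bordering with respect to the support hyperplane $H_P$. I would then rerun, in rank $4$, the trichotomy of \S\ref{ssez:intbord}. That $\g_{\Si}$ be not internal bordering with respect to $H_P$ is excluded verbatim as in \S\ref{ssez:intbord}.(3), via Proposition~\ref{prop:primitive}.($ii.4$). If $\g_{\Si}$ is maxbord with respect to $H_P$, a non-trivial class of $\g_{\Si}\cap H_P$ yields a fibration $\vf:X\to B$ over a smooth projective toric surface $B$ of Picard number $3$, with fibre of rank $r-3=1$, hence $F\cong\P^1$; thus $X\cong\P(\mathcal{O}_B\oplus\mathcal{O}_B(D))$, and as every smooth complete toric surface of Picard number $3$ is an equivariant one-point blow-up of a Hirzebruch surface $\P(\mathcal{O}_{\P^1}\oplus\mathcal{O}_{\P^1}(a))$, this is case (5) (the alternative ``$X$ is not a blow-up of a lower-rank toric variety'' in the proof of Theorem~\ref{thm:OM}). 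If $\g_{\Si}$ is intbord but not maxbord with respect to $H_P$, then $\vf=f\circ\phi$ where $\phi:X\to X'$ contracts the extremal ray of $\NE(X)$ dual to the facet of $\g_{\Si}$ cut out by the auxiliary hyperplane $H'$, with $\phi^{*}(\Nef(X'))=H'\cap\g_{\Si}$; so $X'$ is a projective toric threefold of rank $3$, and the small-contraction alternative (b) of \S\ref{ssez:intbord}.(2) is excluded for $n=3$ by a count of the rays of $Q$ forced by $H_P$, $H'$ and the facets of $\Mov(X)$ and $\overline{\Eff}(X)$, so that $\phi$ is an equivariant blow-up and $X'$ is smooth. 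Feeding $X'$ into the already proved $r=3$ classification: if $X'$ is a PTB over a Hirzebruch surface $B\cong\P(\mathcal{O}_{\P^1}\oplus\mathcal{O}_{\P^1}(a))$ one gets case (6) with $\vf=f\circ\phi:X\to B$; if instead $X'$ is itself the blow-up of a rank-$2$ threefold $X''\cong\P(\mathcal{O}_{\P^m}\oplus\cdots)$, then $\phi=\phi''\circ\phi'$ is a double blow-up and one gets case (7), with $m\in\{1,2\}$ according to the projective bundle structure on $X''$.

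The hard part will be the rank-$4$ bookkeeping of the previous paragraph, namely: (i) confirming that the extremal contraction $\phi$ coming from the non-maxbord case is always a genuine divisorial blow-up when $n=3$ --- this is where the dimension hypothesis is genuinely used, through the adaptation to $r=4$ of the ray-count argument given for $r=3$ in \S\ref{ssez:intbord}.(2)(b); and (ii) checking that iterating the $r=3$ classification on $X'$ stops after at most one further blow-up, i.e. that the threefold $X''$ occurring in case (7) is already a PTB over a projective space. Both are combinatorial statements about the secondary fan of $X$, and the chamber pictures of Examples~\ref{ex:1} and \ref{ex:2} are the model cases to test against. The remaining content is a direct transcription of Theorems~\ref{thm:intro} and \ref{thm:OM}, Kleinschmidt's theorem and the classification of smooth complete toric surfaces.
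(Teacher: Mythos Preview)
Your treatment of $r\le 3$ matches the paper's: cases (1)--(4) are indeed just the $3$--dimensional instance of Theorem~\ref{thm:intro}, and case~(5) of that theorem is already excluded there for $n\le 3$.

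For $r=4$ your route diverges from the paper's and carries a genuine gap. You propose to rerun the trichotomy of \S\ref{ssez:intbord} in rank~$4$: exclude ``not intbord'' verbatim as in \S\ref{ssez:intbord}.(3), and in the intbord--not--maxbord case show that the specific extremal contraction $\phi$ dual to $H'$ is divisorial by adapting the ray-count of \S\ref{ssez:intbord}.(2)(b). Both steps are problematic. The argument in \S\ref{ssez:intbord}.(3) is written for $r=3$ (it uses that every $\b\in\B(\g_\Si)$ is a $3$--simplex $\langle\q,\q_i,\q_j\rangle$), so ``verbatim'' does not apply. More seriously, the ray-count in \S\ref{ssez:intbord}.(2)(b) is tailored to $r=3$: it produced $7\le n+3$, whence $n\ge 4$; the analogous count in $\R^4$ is not carried out, and with $n+r=7$ columns there is no evident slack. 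You flag this as the hard part but do not supply it, and your subsequent assertion that ``$X'$ is a projective toric threefold of rank~$3$'' already presupposes that $\phi$ is divisorial.

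The paper sidesteps all of this. Its proof does not analyse the specific contraction attached to $H'$ at all: it invokes the Oda--Miyake dichotomy \cite[Thm.~9.6]{OM} directly. If $\g_\Si$ is maxbord w.r.t.\ $H_P$ one gets the PTB structure over a rank--$3$ toric surface (case~(5)); if not, then by \cite[Thm.~9.6]{OM} $X$ must be an equivariant blow--up of a smooth projective toric threefold $X'$ of rank~$3$, and one simply feeds $X'$ into the $r=3$ classification (Theorem~\ref{thm:intro}, cases~(3) and~(4)), obtaining cases~(6) and~(7). No rank--$4$ version of \S\ref{ssez:intbord} is needed. Incidentally, your ``hard part~(ii)'' is not hard: once $X'$ has rank~$3$, Theorem~\ref{thm:intro} applied to $X'$ already terminates after at most one further blow--down, since case~(4) there lands on a rank--$2$ PTB $X''$.
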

\begin{proof} Cases from (1) to (4) are precisely the 3-dimensional occurrence of the same cases in Theorem~\ref{thm:intro}.

Let us then assume $r=4$. Theorem \ref{thm:OM} ensures that the chamber $\g_{\Si}$ can be assumed to be bordering w.r.t. the support hyperplane $H_P$ of a nef (hence bordering) primitive collection $\pc=\{V_P\}$. The hyperplane $H_P$ cuts out a facet of $\gkz=\langle Q\rangle$: in particular $1\leq\dim(\Nef(X)\cap H_P)\leq 3$.

On the one hand, if $\dim(\Nef(X)\cap H_P)=3$ (i.e. $\g_{\Si}$ is maxbord) then $X$ is a PTB, $X\cong\P(\mathcal{O}_B\oplus\mathcal{O}_B(D))$, where $B$ is a smooth toric surface of rank 3 and $\mathcal{O}_B(D)\in\Pic(B)$. In particular $B$ turns out to be the blow up of a Hirzebruch surface $\P(\mathcal{O}_{\P^1}\oplus\mathcal{O}_{\P^1}(a))$, for some $a\geq0$, so giving part (5) in the statement.

On the other hand, if $\dim(\Nef(X)\cap H_P)\leq 2$ then \cite[Thm.~9.6]{OM} implies that $X$ has necessarily to be a blow up of a toric threefold $X'$ of rank $r'=3$ and Theorem~\ref{thm:intro} applies to $X'$. Since $\dim(X')=3$ we have only to consider cases (3) and (4) in Thm.~\ref{thm:intro}. The former gives part (6) in the statement and corresponds to having $\dim(\Nef(X)\cap H_P)=2$. The latter gives part (7) in the statement and corresponds to having $\dim(\Nef(X)\cap H_P)=1$.
\end{proof}

\subsection{A 4-dimensional counterexample with Picard number 4}\label{ssez:esempio}

Consider the following reduced $F$ and $W$ matrices, $V$ and $Q$ respectively,
\begin{eqnarray*}
  V &=& \left(
          \begin{array}{cccccccc}
            1&0&0&-1&0&1&-1&0 \\
            0&1&0&1&0&0&-1&1 \\
            0&0&1&1&0&-1&0&1 \\
            0&0&0&0&1&-1&1&0 \\
          \end{array}
        \right)
   \\
  Q=\G(V) &=& \left(
                \begin{array}{cccccccc}
                  1&0&0&1&0&1&1&0 \\
                  0&1&1&0&0&1&1&0 \\
                  0&0&1&1&1&2&1&0 \\
                  0&0&0&0&0&1&1&1 \\
                \end{array}
              \right)
\end{eqnarray*}
It turns out that $|\SF(V)|=10$. All these fans give rise to projective, $\Q$-factorial toric varieties of dimension and rank $n=4=r$: 8 of them are regular fans, giving rise to as many smooth projective toric varieties.

Given a fan $\Si_i\in\SF(V)$, $1\leq i\leq 10$, let $\g_i=\Nef(X(\Si_i))$ be the associated chamber of the secondary fan. Then we get
\begin{eqnarray}\label{camere}
  \g_1 &=& \langle\q_3,\q_4,\w_1,\w_4\rangle \quad\text{smooth}\\
  \nonumber
  \g_2 &=& \langle\q_4,\w_1,\w_2,\w_4\rangle \quad\text{smooth}\\
  \nonumber
  \g_3 &=& \langle\q_3,\w_1,\w_3,\w_4\rangle \quad\text{smooth}\\
  \nonumber
  \g_4 &=& \langle\q_7,\w_2,\w_3,\w_5\rangle \quad\text{singular}\\
  \nonumber
  \g_5 &=& \langle\q_3,\q_4,\q_6,\w_4\rangle \quad\text{smooth}\\
  \nonumber
  \g_6 &=& \langle\q_6,\q_7,\w_2,\w_3\rangle \quad\text{singular}\\
  \nonumber
  \g_7 &=& \langle\q_3,\q_6,\w_3,\w_4\rangle \quad\text{smooth}\\
  \nonumber
  \g_8 &=& \langle\w_1,\w_2,\w_3,\w_4,\w_5\rangle \quad\text{smooth}\\
  \nonumber
  \g_9 &=& \langle\q_4,\q_6,\w_2,\w_4\rangle \quad\text{smooth}\\
  \nonumber
  \g_{10} &=& \langle\q_6,\w_2,\w_3,\w_4\rangle \quad\text{smooth}
\end{eqnarray}
where $Q=(\q_1,\ldots,\q_8)$ and
$$
\w_1=\left(
       \begin{array}{c}
         1 \\
         1 \\
         1 \\
         0 \\
       \end{array}
     \right)\ ,\
\w_2=\left(
       \begin{array}{c}
         2 \\
         1 \\
         2 \\
         1 \\
       \end{array}
     \right)\ ,\
\w_3=\left(
       \begin{array}{c}
         1 \\
         2 \\
         2 \\
         1 \\
       \end{array}
     \right)\ ,\
\w_4=\left(
       \begin{array}{c}
         2 \\
         2 \\
         3 \\
         1 \\
       \end{array}
     \right)\ ,\
\w_5=\left(
       \begin{array}{c}
         2 \\
         2 \\
         2 \\
         1 \\
       \end{array}
     \right)
$$
The pseudo-effective cone $\Eff(X(\Si_i))=\langle Q\rangle=\langle\q_1,\q_2,\q_5,\q_8\rangle=F^4_+$ is given by the positive orthant of $F^4_+\subset F^4_{\R}=\Cl(X(\Si_i))\otimes \R\cong\R^4$: then a chamber $\g_i$ is bordering if and only if it admits a generator with some 0 entry. Therefore $\g_{10}$ is no-bordering, meaning that $X=X(\Si_{10})$ is a smooth, projective, toric variety whose nef cone $\Nef(X)$ is internal to $\Eff(X)$: then every nef divisor of $X$ is big.

\noindent Maximal cones of the fan $\Si_{10}$ are the following
\begin{eqnarray*}
\Si_{10}(4)=&\{\langle2, 4, 5, 7\rangle, \langle4, 5, 7, 8\rangle, \langle3, 4, 7, 8\rangle, \langle3, 4, 6, 7\rangle, \langle2, 4, 6, 7\rangle, \langle3, 5, 7, 8\rangle,&\\ &\langle2, 4, 5, 8\rangle, \langle1, 3, 5, 7\rangle, \langle2, 5, 6, 7\rangle, \langle3, 4, 6, 8\rangle, \langle2, 4, 6, 8\rangle, \langle1, 5, 6, 7\rangle,&\\ &\langle1, 3, 6, 7\rangle, \langle1, 3, 5, 8\rangle, \langle1, 2, 5, 8\rangle, \langle1, 2, 5, 6\rangle, \langle1, 3, 6, 8\rangle, \langle1, 2, 6, 8\rangle\}&
\end{eqnarray*}
where we adopted the notation $\langle i,j,k,l\rangle:=\langle \v_i,\v_j,\v_k,\v_l\rangle$.

A geometric description of $X$ can be obtained by observing that $\g_1$ is a maxbord chamber w.r.t. $H_4:x_4=0$, where $x_1,x_2,x_3,x_4$ are coordinates of $F^4_{\R}\cong\R^4$: in fact $\q_3,\q_4,\w_1\in H_4$. Then, following methods described in \cite[\S~3]{RT-Qfproj}, one observes that
$$
Q\sim \left(
        \begin{array}{cccccccc}
          1&0&0&1&0&0&0&-1 \\
          0&1&1&0&0&0&0&-1 \\
          0&0&1&1&1&0&-1&-2 \\
          0&0&0&0&0&1&1&1 \\
        \end{array}
      \right)
$$
Calling $X_1:=X_1(\Si_1)$, the fact that $\g_1$ is maxbord w.r.t. $H_4$ implies that the bottom row of $Q$ gives a primitive relation of $\Si_1$ and $X_1$ is a PTB over a smooth toric surface $B$ of Picard number $3$. Namely, the submatrix $Q'$ of $Q$, obtained by deleting the bottom row and the columns $\q_6,\q_7,\q_8$, giving the primitive collection of $\Si_1$ supported by $H_4$, is
$$Q'=\left(
       \begin{array}{ccccc}
         1 & 0 & 0 & 1 & 0 \\
         0 & 1 & 1 & 0 & 0 \\
         0 & 0 & 1 & 1 & 1 \\
       \end{array}
     \right)\ \Rightarrow\ V'=\G(Q')=\left(
              \begin{array}{ccccc}
                1&0&0&-1&1 \\
                0&1&-1&0&1 \\
              \end{array}
            \right)
$$
There exists a unique fan $\Si'\in\SF(V')$ and $B=B(\Si')$ turns out to be a blow up in one point of $\P^1\times\P^1$. Let $e$ be the exceptional $\P^1$ of the blow up and $h_1,h_2$ be the strict transforms of the generators of $\Pic(\P^1\times\P^1)$. Then
$$X_1\cong\P\left(\mathcal{O}_B\oplus\mathcal{O}_B(e)\oplus\mathcal{O}_B(h_1+h_2+2e)\right)\ .$$
By observing that the pseudo-effective cone $\langle Q\rangle$ is symmetric w.r.t. the hyperplane of $F^4_{\R}$ passing through $\q_5,\q_7,\q_8$ and the origin, we obtain at least two isomorphic ways of describing the toric flip $X_1\dashrightarrow X$, which is an isomorphism in codimension 1. In any case, it is realized by at least three consecutive \emph{wall-crossings} whose wall relations are given by multiplying the respective inward normal vectors against the columns of $Q$. The two minimal isomorphic sequences of flips follow by the following chambers' adjacencies
\begin{equation}\label{flips}
\xymatrix{&\g_7\ar@{<~>}[rr]^-{\langle\q_3,\w_3,\w_4\rangle}_-{\n_7}&&\g_3
   \ar@{<~>}[dr]^-{\langle\q_3,\w_1,\w_4\rangle}_-{\n_3}&\\
   \g_{10}\ar@{<~>}[ur]^-{\langle\q_6,\w_3,\w_4\rangle}_-{\n_{10}}\ar@{<~>}[dr]_-{\langle\q_6,\w_2,\w_4\rangle}&&&&\g_1\\
   &\g_9\ar@{<~>}[rr]_-{\langle\q_4,\w_2,\w_4\rangle}&&\g_2
   \ar@{<~>}[ur]_-{\langle\q_4,\w_1,\w_4\rangle}&&\\}
\end{equation}
in which common facets of adjacent chambers have been spelled out. Since the involved chambers are simplicial, inward normal vectors to the facets of the chamber $\g_i$ ($i\neq 8$) are easily determined by the rows of the inverse matrix $G_i^{-1}$, where $G_i$ is the $4\times 4$ integer matrix defined by the generators of $\g_i$, as given in (\ref{camere}).

Let us choose the upper sequence in (\ref{flips}). By the following inverse matrices
\begin{equation*}
  G_{10}^{-1}=\left(
              \begin{array}{cccc}
                -1&-1&1&1 \\
                0&0&1&-2 \\
                1&0&-1&1 \\
                0&1&-1&1 \\
              \end{array}
            \right) ,\quad G_7^{-1}=\left(
                                   \begin{array}{cccc}
                                     -1&-1&1&1 \\
                                     1&0&0&-1 \\
                                     0&1&-1&1 \\
                                     -1&0&1&-1 \\
                                   \end{array}
                                 \right) ,
\end{equation*}
\begin{equation*}
G_3^{-1}=\left(
                                   \begin{array}{cccc}
                                     -1&0&1&-1 \\
                                     0&-1&1&0 \\
                                     1&1&-1&-1 \\
                                     0&1&-1&1 \\
                                   \end{array}
                                 \right)
\end{equation*}
 one can easily deduce that the inward, w.r.t. $\g_{10}$, normal vector to the common facet $\g_{10}\cap\g_7=\langle\q_3,\w_3,\w_4\rangle$ is $\n_{10}=(1,0,-1,1)$; analogously $\n_7=(-1,-1,1,1)$ is the inward, w.r.t. $\g_7$, normal vector to $\g_7\cap\g_3=\langle\q_3,\w_3,\w_4\rangle$ and $\n_3=(0,1,-1,1)$ is the inward, w.r.t. $\g_3$, normal vector to $\g_3\cap\g_1=\langle\q_3,\w_1,\w_4\rangle$. The wall relations are then given by
\begin{eqnarray*}
  \n_{10}\cdot Q &=& (1,0,-1,0,-1,0,1,1) \\
  \n_{7}\cdot Q &=& (-1,-1,0,0,1,1,0,1) \\
  \n_{3}\cdot Q &=& (0,1,0,-1,-1,0,1,1)
\end{eqnarray*}
meaning that the associated sequence of flips
$$X_1\stackrel{\varphi_3}{\dashrightarrow} X_3\stackrel{\varphi_7}{\dashrightarrow} X_7\stackrel{\varphi_{10}}{\dashrightarrow} X$$
is defined as follows:
\begin{itemize}
  \item $\varphi_3$ contracts $D_4\cap D_5$ and $\varphi_3^{-1}$ contracts $D_2\cap D_7\cap D_8$
  \item $\varphi_7$ contracts $D_1\cap D_2$ and $\varphi_7^{-1}$ contracts $D_5\cap D_6\cap D_8$
  \item $\varphi_{10}$ contracts $D_3\cap D_5$ and $\varphi_{10}^{-1}$ contracts $D_1\cap D_7\cap D_8$
\end{itemize}
where, as usual the prime divisor $D_i$ is the closure of the torus orbit $D_i=\overline{O(\v_i)}$, whose definition does not depend on the fan choice in $\SF(V)$. To give an account of what is the modification induced by the codimesion 1 isomorphism $\varphi=\varphi_{10}\circ\varphi_{7}\circ\varphi_{3}$, notice that $\Nef(B)=\langle \q_3',\q_4',\w_1'\rangle$, where $\q'_i$ are columns of $Q'$ and $\w_1'=\left(\begin{matrix}
1 \\ 1\\ 1\\\end{matrix}\right)$. Calling $\pi$ the canonical projection $X_1\to B$, then
$$\pi^*\left(\Nef(B)\right)=\langle \q_3,\q_4,\w_1\rangle\subset\Nef(X_1)=\left\langle[D_1],[D_3],[D_4]\right\rangle=\left\langle[D_2],[D_3],[D_4]\right\rangle$$
describes the locus of non-big divisors inside $\Nef(X_1)$. Then $\varphi$ modifies the intersection properties of these divisors so that they are no more nef on $X$. This fact can be directly checked by observing that $\NE(X)=\left\langle (G_{10}^{-1})^T\right\rangle$.

Recalling \cite[Thm.~1.4]{Cox-vRenesse}, the inverse matrix $G_{10}^{-1}$ is essential to determining the primitive relations generating $\Nef(X)$, given by
\begin{equation*}
  G_{10}^{-1}\cdot Q\cdot\left(
                           \begin{array}{c}
                             \v_1 \\
                             \vdots \\
                             \v_8 \\
                           \end{array}
                         \right)= \left(
                                    \begin{array}{c}
                                      -\v_1-\v_2+\v_5+\v_6+\v_8 \\
                                      \v_3+\v_4+\v_5-\v_7-2\v_8 \\
                                      \v_1-\v_3-\v_5+\v_7+\v_8 \\
                                      \v_2-\v_4-\v_5+\v_7+\v_8 \\
                                    \end{array}
                                  \right)
\end{equation*}
Notice that their sum gives the primitive relation $\v_6+\v_7+\v_8$, corresponding to the bottom row of $Q$.

Let us finally observe that the anti-canonical class in $\Cl(X)$ is given by
$$[-K_X]=Q\cdot\left(
          \begin{array}{c}
            1 \\
            \vdots \\
            1 \\
          \end{array}
        \right)= \left(
                   \begin{array}{c}
                     4 \\
                     4 \\
                     6 \\
                     3 \\
                   \end{array}
                 \right)= \q_6+\w_2+\w_3
$$
meaning that $[-K_X]\in\partial\Nef(X)$. Therefore \emph{$-K_X$ is a non-ample, nef and big divisor} i.e. $X$ is a \emph{weak Fano} toric fourfold (see also \cite[\S~3]{RT-Ample} for more details).

\subsection{Generalizing the counterexample to higher dimension}\label{ssez:generalizzazione}

Consider the W-matrix $Q$ defined in \ref{ssez:esempio}. The definition (\ref{Mov}) of $\Mov(Q)$ implies that the column $\q_7$ of $Q$ always belongs to (the boundary of ) $\Mov(Q)$. Actually it turns out that $\Mov(Q)=\langle \q_3, \q_4, \q_6, \q_7, \w_1\rangle$. Therefore augmenting the multiplicity of $\q_7$ produces positive, REF, $W$-matrices
\begin{equation*}
  Q_s=\left(
        \begin{array}{cccccc}
          1&0&0&1&0&1 \\
          0&1&1&0&0&1 \\
          0&0&1&1&1&2 \\
          0&0&0&0&0&1 \\
        \end{array}\right.\overbrace{\begin{array}{ccc}
                                1 & \cdots & 1 \\
                                1 & \cdots & 1 \\
                                1 & \cdots & 1 \\
                                1 & \cdots & 1
                              \end{array}}^{s\ \text{times}}
                              \left. \begin{array}{c}
                                       0 \\
                                       0 \\
                                       0 \\
                                       1
                                     \end{array}
      \right)
\end{equation*}
giving the same secondary fan than $Q$, i.e. $\Ga(Q_s)=\Ga(Q)$.

 \noindent But $V_s=\G(Q_s)\in\mathbf{M}(s+3,4;\Z)$ is a fan matrix of $(s+3)$-dimensional toric varieties. As above $|\SF(V)|=10$ and 8 of these fans give smooth, projective, toric varieties of dimension $n=s+3\geq 4$ and rank $r=4$. The same choice of the chamber $\g_{10}=\langle\q_6,\w_2,\w_3,\w_4\rangle$ gives a $n$-dimensional, smooth, projective, toric variety $X=X(\Si_{10})$ \emph{whose non-trivial nef divisors are big}.

 For a geometric description of $X$, let us proceed as above:
 \begin{equation*}
   Q_s\sim\left(
            \begin{array}{cccccccccc}
              1&0&0&1&0&0& 0 & \cdots & 0&-1 \\
              0&1&1&0&0&0& 0 & \cdots & 0&-1 \\
              0&0&1&1&1&0& -1 & \cdots & -1&-2 \\
              0&0&0&0&0&1& 1 & \cdots & 1&1
            \end{array}
          \right)
 \end{equation*}
 gives
 $$\xymatrix{X_1=X_1(\Si_1)\cong\P\left(\mathcal{O}_B\oplus\mathcal{O}_B(e)^{\oplus s}\oplus\mathcal{O}_B(h_1+h_2+2e)\right)\ar@{-->}[r]^-{\varphi}& X}$$
 where $B$ is still a blow up in one point of $\P^1\times\P^1$ and the same wall-crossings as above describe the equivariant birational equivalence and isomorphism in codimension 1 $\varphi=\varphi_{10}\circ\varphi_7\circ\varphi_3$. Finally one can easily notice that $[-K_X]\not\in \Nef(X)$, for any $s>1$.

 \begin{acknowledgements}
      We warmly thank Cinzia Casagrande for stimulating conversation and deep remarks originating, inter alia, the alternative proof of Theorem \ref{thm:} given in \S\,\ref{ssez:batyrev}. We are also indebted with Brian Lehmann who promptly informed us about the reference \cite{FS}, so giving a strong improvement of results in the present paper.
    \end{acknowledgements}

\end{document}